\colorlet{hili}{gray!20}
\def\le{\leqslant}
\def\ge{\geqslant}
\def\lpa#1{\bigl({#1}\bigr)}
\def\Lpa#1{\Bigl({#1}\Bigr)}
\def\llpa#1{\biggl({#1}\biggr)}
\def\dd#1{{\,\rm d}#1}
\def\ve{\varepsilon}
\def\eqtext#1{\quad\text{#1}\quad}
\newtheorem{thm}{Theorem}[section]
\newcommand*\circled[2][1.6]{\tikz[baseline=(char.base)]{
    \node[shape=circle, draw, inner sep=1pt, 
        minimum height={\f@size*#1},] (char) {\vphantom{WAH1g}#2};}}
\def\and{\mbox{\quad and \quad}}
\begin{document}

\title[A curious identity in connection with saddle-point method]{A 
curious identity in connection with saddle-point method and 
Stirling's formula}

\let\origmaketitle\maketitle
\def\maketitle{
  \begingroup
  \def\uppercasenonmath##1{} 
  \let\MakeUppercase\relax 
  \origmaketitle
  \endgroup
}

\author{Hsien-Kuei Hwang}
\thanks{The research of the author was partially supported 
by Taiwan Ministry of Science and Technology under the Grant MOST 
108-2118-M-001-005-MY3.}

\address{Institute of Statistical Science, Academia Sinica, Taipei,
115, Taiwan}
\date{\today}
\email{\textbf{hkhwang@stat.sinica.edu.tw}}

\maketitle

\begin{abstract}
	
We prove the curious identity in the sense of formal power series:
\[
	\int_{-\infty}^{\infty}[y^m]
	\exp\left(-\frac{t^2}2
	+\sum_{j\ge3}\frac{(it)^j}{j!}\, 
	y^{j-2}\right)\mathrm{d} t
	= \int_{-\infty}^{\infty}[y^m]
    \exp\left(-\frac{t^2}2+
	\sum_{j\ge3}\frac{(it)^j}{j}\, 
	y^{j-2}\right)\mathrm{d} t,
\]
for $m=0,1,\dots$, where $[y^m]f(y)$ denotes the coefficient of 
$y^m$ in the Taylor expansion of $f$. The generality of this identity 
from the perspective of saddle-point method is also examined. 

\end{abstract}

\section{Introduction}

The following unusual identity was discovered through different
manipulations of the saddle-point method in order to derive
Stirling's formula, which has a huge literature since de Moivre's
and Stirling's pioneering analysis in the early eighteenth century;
see for example the survey \cite{Borwein2018} (and the references
therein) and the book \cite{Flajolet2009} for five different
analytic proofs. While the identity can be deduced from known
expansions for $n!$ (e.g., \cite{Brassesco2011,Paris2014}), the 
formulation, as well as the proof given here, is of independent 
interest \emph{per se}. Denote by $[y^m]f(y)$ the coefficient of 
$y^m$ in the Taylor expansion of $f$.
\begin{thm} Let \label{T:am-bm}
\begin{align}\label{E:am}
	c_m := \frac{1}{\sqrt{2\pi}}
	\int_{-\infty}^{\infty}[y^m]
	\exp\llpa{-\frac{t^2}2
	+\sum_{j\ge3}\frac{(it)^j}
	{\colorbox{hili}{$j!$}}\, y^{j-2}}\dd t,
\end{align}
and 
\begin{align}\label{E:bm}
	d_m := \frac{1}{\sqrt{2\pi}}
	\int_{-\infty}^{\infty}
    [y^m]\exp\llpa{-\frac{t^2}2+
	\sum_{j\ge3}\frac{(it)^j}
	{\colorbox{hili}{$j$}}\, y^{j-2}}\dd t.
\end{align}
Then 
\begin{align}\label{E:id}
	c_m=d_m\qquad (m=0,1,\dots).
\end{align}
\end{thm}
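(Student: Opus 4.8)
The plan is to resum the two inner series into closed form, rewrite $\sum_m c_m y^m$ and $\sum_m d_m y^m$ as two formal Gaussian integrals, transport one into the other by an explicit change of variables, and annihilate the leftover factor using the elementary remark that $\int_{\mathbb{R}}\partial_t\bigl(e^{-t^2/2}P(t)\bigr)\,dt=0$ for every polynomial $P$.

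First I would resum. With $F(t,y):=-\tfrac{t^2}{2}+\sum_{j\ge3}\tfrac{(it)^j}{j!}\,y^{j-2}$ and $G(t,y):=-\tfrac{t^2}{2}+\sum_{j\ge3}\tfrac{(it)^j}{j}\,y^{j-2}$, summing the exponential and the logarithmic series (with the spurious pole at $y=0$ cancelling against the subtracted low-order terms) yields
\[F(t,y)=\frac{e^{ity}-1-ity}{y^{2}},\qquad G(t,y)=-\,\frac{ity+\log(1-ity)}{y^{2}},\]
both read as formal power series in $y$ with coefficients polynomial in $t$. Thus $[y^m]e^{F(t,y)}=e^{-t^2/2}p_m(t)$ and $[y^m]e^{G(t,y)}=e^{-t^2/2}q_m(t)$ for polynomials $p_m,q_m$, and, writing $C(y):=\sum_m c_m y^m$ and $D(y):=\sum_m d_m y^m$, we get the coefficientwise identities $\sqrt{2\pi}\,C(y)=\int_{\mathbb{R}}e^{F(t,y)}\,dt$ and $\sqrt{2\pi}\,D(y)=\int_{\mathbb{R}}e^{G(t,y)}\,dt$.

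The crux of the argument — and where I expect the real work to lie — is a change of variables in the $D$-integral. I would substitute $t=t(u):=\dfrac{1-e^{iuy}}{iy}$, i.e.\ $1-ity=e^{iuy}$, a formal power series in $y$ with monomial coefficients and leading ($y^0$) term $-u$; then $\log(1-ity)=iuy$ and $ity=1-e^{iuy}$, so the phase transforms exactly, $G(t(u),y)=F(u,y)$, while $t'(u)=-e^{iuy}$ and the substitution reverses orientation, giving $\sqrt{2\pi}\,D(y)=\int_{\mathbb{R}}e^{F(u,y)}\,e^{iuy}\,du$. Since $u\mapsto t(u)$ is not a genuine self-map of $\mathbb{R}$ beyond its leading term, the validity of this substitution for formal Gaussian integrals must itself be argued; I would do so by a Moser-type interpolation: set $t_s(u):=-u+s\bigl(t(u)+u\bigr)$ for $s\in[0,1]$, so $t_0$ is the bona fide bijection $u\mapsto-u$ and $t_1=t$. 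A short computation gives $\partial_s\bigl(e^{G(t_s(u),y)}t_s'(u)\bigr)=\partial_u\bigl((t(u)+u)\,e^{G(t_s(u),y)}\bigr)$, a total $u$-derivative of $e^{-u^2/2}$ times a polynomial-in-$u$ formal power series in $y$; hence $\int_{\mathbb{R}}e^{G(t_s(u),y)}t_s'(u)\,du$ is independent of $s$, and comparing $s=0$ (where it equals $-\sqrt{2\pi}\,D(y)$, after the honest substitution $u\mapsto-u$) with $s=1$ (where it equals $-\int_{\mathbb{R}}e^{F(u,y)}e^{iuy}\,du$) yields the asserted identity.

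Finally I would remove the extra exponential: from $\partial_u F(u,y)=\dfrac{i\,(e^{iuy}-1)}{y}$ we get $e^{iuy}-1=-iy\,\partial_u F(u,y)$, hence $e^{F(u,y)}\bigl(e^{iuy}-1\bigr)=-iy\,\partial_u\bigl(e^{F(u,y)}\bigr)$, and integrating over $\mathbb{R}$ — each $y$-coefficient of $e^{F(u,y)}$ being $e^{-u^2/2}$ times a polynomial, hence vanishing at $u=\pm\infty$ — gives $\int_{\mathbb{R}}e^{F(u,y)}\bigl(e^{iuy}-1\bigr)\,du=0$. Combining, $\sqrt{2\pi}\,D(y)=\int_{\mathbb{R}}e^{F(u,y)}e^{iuy}\,du=\int_{\mathbb{R}}e^{F(u,y)}\,du=\sqrt{2\pi}\,C(y)$, i.e.\ $c_m=d_m$ for all $m$. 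The same mechanism — a single formal substitution matching two phase functions up to a total-derivative discrepancy — is what I would expect to drive the more general saddle-point version alluded to after the theorem.
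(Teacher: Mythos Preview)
Your argument is correct and is a genuinely different proof from the paper's. The paper does not work directly with the two formal Gaussian integrals; instead it routes everything through Stirling's formula. It shows that $c_{2m}$ and $d_{2m}$ arise as the coefficients in two separate asymptotic expansions of $1/n!$ (one from the contour $z=ne^{i\theta}$, the other from $z=(n{+}1)(1+it)$), then reduces each to a Lagrange-inversion expression,
\[
g_m=[t^m]\Bigl(\tfrac{t^2/2}{e^t-1-t}\Bigr)^{(m+1)/2},\qquad
h_m=[y^m]\Bigl(\tfrac{y^2/2}{y-\log(1+y)}\Bigr)^{(m+1)/2},
\]
and finally proves $g_m=h_m$ for $m\neq1$ via the substitution $s=e^t-1$ together with the differential identity $s\varphi'/\varphi=1-\varphi^2/(1+s)$, which kills the residual term $[s^{m-1}]\varphi^{m+1}/(1+s)$.

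Your route is more self-contained: the single formal substitution $1-ity=e^{iuy}$ sends $G$ to $F$ directly, your homotopy $t_s$ legitimises the change of variables coefficientwise (the identity $\partial_s\bigl(e^{G(t_s,y)}t_s'\bigr)=\partial_u\bigl((t+u)e^{G(t_s,y)}\bigr)$ checks out), and the leftover Jacobian factor $e^{iuy}$ is disposed of by the total-derivative observation $e^{F}(e^{iuy}-1)=-iy\,\partial_u e^{F}$. This avoids Stirling's formula, Watson's lemma, and Lagrange inversion altogether. What the paper's approach buys, on the other hand, is the explicit link to the Stirling coefficients and the computable closed forms $g_m,h_m$, as well as the framework (\S2--3) in which the same pair of substitutions is reused for general saddle-point expansions; your closing remark about extending the mechanism is apt for the Lagrangean case of \S3, but note that in the general setting of \S2.2 the two expansions have different arguments ($r$ versus $R$) and are not literally equal.
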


When $m$ is odd, $c_m=d_m=0$ because the coefficient of $y^m$ 
contains only odd powers of $t$. When $m=2l$ is even, the identity 
\eqref{E:id} can be written explicitly as follows:
\begin{align*}
	c_{2l}&=\sum_{1\le h\le 2l}
	\frac{(-1)^{l+h}(2l+2h)!}{(l+h)!2^{l+h}}
	\sum_{\substack{j_1+2j_2+\cdots+2lj_{2l}=2l
	\\ j_1+\cdots+j_{2l}=h}}
	\frac{1}
	{j_1!\cdots j_{2l}!
	\cdot \colorbox{hili}{$3!^{j_1}4!^{j_2}
	\cdots (2l+2)!^{j_{2l}}$}}\\
	&= \sum_{1\le h\le 2l}
	\frac{(-1)^{l+h}(2l+2h)!}{(l+h)!2^{l+h}}
	\sum_{\substack{j_1+2j_2+\cdots+2lj_{2l}=2l
	\\ j_1+\cdots+j_{2l}=h}}
	\frac{1}
	{j_1!\cdots j_{2l}!
	\cdot \colorbox{hili}{$3^{j_1}4^{j_2}
	\cdots (2l+2)^{j_{2l}}$}}\\
	&= d_{2l}.
\end{align*}
In particular, 
\[
	\{c_{2l}\}_{l\ge0}
	= \left\{1,-\frac1{12},\frac1{288}, \frac{139}{51840},
	-\frac{571}{2488320},-\frac{163879}{209018880},\cdots\right\},
\]
which are modulo sign the coefficients appearing in the asymptotic 
expansion of Stirling's formula; see \cite[\S 1.18]{Erdelyi1953} or 
\cite[\href{https://oeis.org/A001163}{A001163}, 
\href{https://oeis.org/A001164}{A001164}]{OEIS2022}:
\begin{align}\label{E:stir-cm0}	
	\frac1{n!}\sim \frac{e^nn^{-n-\frac12}}{\sqrt{2\pi}}
	\sum_{m\ge0}c_{2m}n^{-m},\eqtext{or}
	n!\sim \sqrt{2\pi}e^{-n}n^{n+\frac12}
	\sum_{m\ge0}(-1)^m c_{2m}n^{-m}.
\end{align}
These Stirling coefficients have been extensively studied in the 
literature; see, e.g., \cite[\S 8.2]{Dingle1973}, and 
\cite{Boyd1994,Mortici2010,Paris2014,Nemes2015}, and the 
references cited there.

On the other hand, the integral in \eqref{E:am} without the 
coefficient-extraction operator $[y^m]$ is divergent for 
$y\in\mathbb{R}$ due to periodicity:
\[
	\int_{-\infty}^{\infty}
	\exp\llpa{-\frac{t^2}2
	+\sum_{j\ge3}\frac{(it)^j}{j!}\, y^{j-2}}\dd t
	=\int_{-\infty}^\infty
	\exp\llpa{\frac{e^{ity}-1-ity}{y^2}}\dd t,
\]
while that in \eqref{E:bm} is absolutely convergent for real 
$|y|<1$:
\[
	\int_{-\infty}^{\infty}
	\exp\llpa{-\frac{t^2}2+
    \sum_{j\ge3}\frac{(it)^j}{j}\, y^{j-2}}\dd t
	= \int_{-\infty}^{\infty}
	\lpa{1-ity}^{-y^{-2}}e^{-it/y}\dd t.
\]	

\begin{proof}[Proof of Theorem~\ref{T:am-bm}]
For convenience, we write
\[
	f_n \thickapprox g_n
	\eqtext{when}f_n = g_n + O\lpa{e^{-\ve n}},
\]
for some generic $\ve>0$ whose value is immaterial.

\medskip

\paragraph{\textbf{The standard asymptotic expansion}}
We begin with the Cauchy integral representation for $n!^{-1}$:
\[
	\frac1{n!}
	= \frac1{2\pi i}\oint_{|z|=n} z^{-n-1}e^z \dd z,
\]
where the integration contour is the circle with radius $|z|=n$. 
The standard application of the saddle-point method (see 
\cite[p.~555]{Flajolet2009}) proceeds by first making the change of 
variables $z\mapsto ne^u$, giving 
\begin{align}\label{E:sd1}
	\frac{e^{-n}n^{n}}{n!}
	&= \frac1{2\pi i}
	\int_{-\pi i}^{\pi i}e^{n(e^u-1-u)} \dd u
	\thickapprox  \frac1{2\pi i}\int_{-\ve i}^{\ve i}
	e^{n(e^u-1-u)}\dd u.
\end{align}
Now by the change of variables $u=\frac{it}{\sqrt{n}}$, we have
\begin{align*}
	\frac1{2\pi i}\int_{-\ve i}^{\ve i}
	e^{n(e^u-1-u)}\dd u
	&= \frac1{2\pi \sqrt{n}}
	\int_{-\ve \sqrt{n}}^{\ve \sqrt{n}}
	\exp\llpa{-\frac{t^2}2
	+\sum_{j\ge3}\frac{(it)^j}{j!}
	\, n^{-\frac12j+1}}\dd t.
\end{align*}
If we choose $\ve=\ve_n=n^{-\frac25}$, say, then
$n\ve_n^2\to\infty$ and $n\ve_n^j\to0$ for $j\ge3$, so that the
series on the right-hand side is small on the integration path; we
can then expand the exponential of this series in decreasing powers
of $n$, and then extending the integration limits to infinity,
yielding the expansion \eqref{E:stir-cm0} with $c_{2m}$ expressed
in the formal power series form \eqref{E:am}. See \cite[Ex.~VIII.3;
p.~555 \emph{et seq.}]{Flajolet2009} for technical details.

On the other hand, a more effective means of computing $c_{2m}$ is to
make first the change of variables $e^u -1-u=\frac12v^2$ in the
rightmost integral in \eqref{E:sd1}, where $u=u(v)$ is positive when
$v$ is and is analytic in $|v|\le \ve$; see \cite[\S~3.6.3]{Temme1996}. Then
\[
	\frac{e^{-n}n^{n}}{n!}
    \thickapprox  
	\frac1{2\pi i}\int_{-\ve i}^{\ve i}
	e^{\frac12 nv^2}g(v)\dd v
	= \frac1{2\pi}\int_{-\ve}^{\ve}
	e^{-\frac12 nt^2}g(it)\dd t,
\]
where $g(v) := \frac{\dd u}{\dd v}$ is analytic in $|v|\le\ve$. By 
Lagrange inversion formula (see \cite[p.~732]{Flajolet2009}),
\begin{align}\label{E:gm}
	g_m := [v^m]g(v) 
	= [t^m]\llpa{\frac{\frac12t^2}{e^t-1-t}}^{\frac12(m+1)}
	\qquad(m=0,1,\dots).
\end{align}
Then a direct application of Watson's Lemma (see 
\cite[\S 1.5]{Wong2001}) gives the asymptotic expansion
\begin{align}\label{E:stir-cm}
	\frac1{n!}
	\thickapprox \frac{e^{n}n^{-n}}{2\pi}
	\sum_{m\ge0}g_{2m}\int_{-\infty}^{\infty}
	e^{-\frac12nt^2}(it)^{2m}\dd t
	\thickapprox \frac{e^{n}n^{-n}}{\sqrt{2\pi n}}
	\sum_{m\ge0}\bar{g}_{2m} n^{-m},
\end{align}
where
\[
	\{\bar{g}_{2m}\}_{m\ge0} 
	:= \Bigl\{g_{2m}\frac{(-1)^m(2m)!}{m!2^m}\Bigr\}_{m\ge0}
	= \Bigl\{1, -\frac1{12}, \frac{1}{288}, 
	\frac{139}{51840}, -\frac{571}{2488320}, 
	\cdots \Bigr\}.
\]
We then obtain the relation
\begin{align}\label{E:am-g2m}
	c_{2m} = \bar{g}_{2m}
	= g_{2m}\frac{(-1)^m(2m)!}{m!2^m}.
\end{align}

\medskip

\paragraph{\textbf{Second asymptotic expansion}}
It is well known that $n!^{-1}$ has the alternative Laplace integral 
representation (see \cite[p.~246]{Whittaker1927}):
\[
	\frac1{n!}
	= \frac1{2\pi i}\int_{R-i\infty}^{R+i\infty} 
	z^{-n-1}e^z \dd z\qquad(R>0),
\]
so that, by the change of variables $z=R(1+x)$, where $R=n+1$,
\begin{align*}
	\frac{e^{-n-1}(n+1)^{n}}{n!}
	&= \frac1{2\pi i}\int_{-i\infty}^{i\infty}
	e^{-(n+1)(\log(1+x)-x)} \dd x\\
	&\thickapprox \frac1{2\pi i}\int_{-\ve i}^{\ve i}
	e^{-(n+1)(\log(1+x)-x)} \dd x.
\end{align*}
Now by the change of variables $x=\frac{it}{\sqrt{n+1}}$, we have
\begin{align*}
	&\frac1{2\pi i}\int_{-\ve i}^{\ve i}
	e^{-(n+1)(\log(1+x)-x)} \dd x\\
	&\qquad= \frac1{2\pi \sqrt{n+1}}
	\int_{-\ve \sqrt{n+1}}^{\ve \sqrt{n+1}}
	\exp\llpa{-\frac{t^2}2
	+\sum_{j\ge3}\frac{(it)^j}{j}
	\, (n+1)^{-\frac12j+1}}\dd t.
\end{align*}
By a similar procedure described above, we then deduce the asymptotic 
expansion
\begin{align}\label{E:stir-dm}
	\frac1{n!}
	\sim \frac{e^{n+1}(n+1)^{-n-\frac12}}
	{\sqrt{2\pi}}\sum_{m\ge0}d_{2m} (n+1)^{-m},
\end{align}
where $d_m$ is given in \eqref{E:bm}; compare \eqref{E:stir-cm}.

On the other hand, by the change of variables 
$\log(1+x)-x=-\frac12y^2$ ($y>0$ when $x>0$), we have 
\begin{align*}
	\frac{e^{-n-1}(n+1)^{n}}{n!}
	&\thickapprox  \frac1{2\pi i}\int_{-\ve i}^{\ve i}
	e^{\frac12(n+1)y^2}h(y) \dd y
	= \frac1{2\pi}\int_{-\ve}^{\ve}
	e^{-\frac12(n+1)t^2}h(it) \dd t,
\end{align*}
where $h(y) = \frac{\dd x}{\dd y}$ is analytic in $|y|\le\ve$. 
Again, by Lagrange inversion formula,
\begin{align}\label{E:hm}
	h_m := [y^m]h(y)
	= [y^m]\llpa{\frac{\frac12y^2}
	{y-\log(1+y)}}^{\frac12(m+1)}
	\qquad(m=0,1,\dots).
\end{align}
Although the definition of $h_m$ looks very different from that of 
$g_m$ (see \eqref{E:gm}), their numerical values coincide except 
for $m=1$:
\begin{center}
\begin{tabular}{c|c>{\columncolor[gray]{0.7}}ccccccccc}
$m$ & $0$ & $1$ & $2$ & $3$ & $4$ & $5$ & $6$ 
& $7$ & $8$ & $9$\\ \hline	
$g_m$ 
 & \multirow{2}{*}{$1$} 
 & $-\frac13$
 & \multirow{2}{*}{$\frac1{12}$} 
 & \multirow{2}{*}{$-\frac 2{135}$} 
 & \multirow{2}{*}{$\frac1{864}$} 
 & \multirow{2}{*}{$\frac1{2835}$} 
 & \multirow{2}{*}{$-\frac{139}{777600}$}
 & \multirow{2}{*}{$\frac1{25515}$} 
 & \multirow{2}{*}{$-\frac{571}{261273600}$} 
 & \multirow{2}{*}{$-\frac{281}{151559100}$}\\
 $h_m$ & &  $\frac23$ 
\end{tabular}	
\end{center}
We thus deduce the relation 
\[
    d_{2m} := h_{2m}\frac{(-1)^m(2m)!}{m!2^m},
\]
which is easily computable by \eqref{E:hm}.

\medskip

\paragraph{\textbf{Equality of the two expansions}}
We next prove that 
\begin{align}\label{E:gh}
	g_m = h_m \qquad(m\ge0; m\ne1),
\end{align}
where $g_m$ and $h_m$ are defined in \eqref{E:gm} and \eqref{E:hm}, 
respectively. Note that for $m\ge0$
\begin{align}\label{E:phi-s}
	g_m = [s^m]\frac{\varphi(s)^{m+1}}{1+s},
	\eqtext{with}
	\varphi(s) := \llpa{\frac{s^2}{2(s-\log(1+s))}}^{\frac12},
\end{align}
by a direct change of variables $s=e^t-1$. Thus we show that
\[
	[s^m]\frac{\varphi(s)^{m+1}}{1+s}
	=[s^m]\varphi(s)^{m+1}
\]
for $m\ne1$, or, equivalently, 
\begin{align}\label{E:zero}
	[s^{m-1}]\frac{\varphi(s)^{m+1}}{1+s} = 0
	\qquad(m\ge0, m\ne1).
\end{align}
Since $m=0,1$ are easily checked, we assume $m\ge2$. By the 
relation 
\[
	\frac{s\varphi'(s)}{\varphi(s)}
	= 1-\frac{\varphi(s)^2}{1+s},
\]
we have 
\begin{align*}
	[s^{m-1}]\frac{\varphi(s)^{m+1}}{1+s}
	&= [s^{m-1}]\lpa{\varphi(s)^{m-1}-
	s\varphi(s)^{m-2}\varphi'(s)}\\
	&= h_{m-1} - [s^{m-2}]\varphi(s)^{m-2}\varphi'(s).
\end{align*}
Now
\begin{align*}
	[s^{m-2}]\varphi(s)^{m-2}\varphi'(s)
	&= [s^{m-2}]\frac{\dd{}}{\dd s}\frac{\varphi(s)^{m-1}}{m-1}
	= [s^{m-1}]\varphi(s)^{m-1}=h_{m-1},
\end{align*}
which proves \eqref{E:zero}, and in turn \eqref{E:gh}. 
Consequently, $c_m=d_m$ for $m\ge0$, implying \eqref{E:id}.
\end{proof}

It is known that (see \cite[\S 8.2]{Dingle1973} and \cite{Boyd1994})
\begin{align*}
	g_{2m}
	\sim (-1)^{l+1}\sqrt{2\pi}(4\pi)^{-2l}
	\times \begin{cases}
		\frac1{24}l^{-\frac32}, &\text{if }m=2l;\\
		2l^{-\frac12}, & \text{if }m=2l-1.
	\end{cases}
\end{align*}
This type of asymptotic behaviors is unusual for functions of 
Lagrangean type; see \cite{Hwang2018} or \S~\ref{S:lag}. 

\section{Asymptotic expansions by saddle-point method}


Quoted from \cite[p.~551]{Flajolet2009}
\begin{quote}\small
\[
	\text{\emph{Saddle-point method 
		$=$ Choice of contour $+$ Laplace’s method.}}
\]
\emph{Similar to its real-variable counterpart, the saddle-point
method is a general strategy rather than a completely deterministic
algorithm, since many choices are left open in the implementation of
the method concerning details of the contour and choices of its
splitting into pieces.}
\end{quote}

\subsection{$z=re^{i\theta}$ or $z=R(1+it)$?}

The two uses above (with $z=re^{i\theta}$ or $z=R(1+it)$) of the 
saddle-point method for coefficient integrals of the form 
\[
	a_n 
	:= \frac1{2\pi i}\int_{\mathscr{C}} z^{-n-1} f(z) \dd z,
\]
for some contour $\mathscr{C}$ are standard in the combinatorial
literature and are reminiscent of the difference between moments and 
factorial moments in probability; the corresponding saddle-point 
equations are given by
\begin{align}\label{E:n-vs-np1}
	\frac{rf'(r)}{f(r)} = n,
	\eqtext{and}
	\frac{Rf'(R)}{f(R)} = n+1,
\end{align}
respectively. Often the question is which one to choose and is better
(numerically or in some other sense)? For example, take $f(z) =
e^{e^z-1}$ (Bell numbers
\cite[\href{https://oeis.org/A000110}{A000110}]{OEIS2022}); then we
found both uses in the literature:

\medskip
\begin{center}
	\begin{tabular}{c|c}
		$re^r=n$ & $Re^R=n+1$ \\ \hline
		\cite{Moser1955,Szekeres1957}
		\cite[Ex.~12.2]{Odlyzko1995}
		\cite[\S~5.8]{Sachkov1996}
		& \cite[\S~6.2]{deBruijn1981} 
		\cite[pp.~560--562]{Flajolet2009}
		\cite[pp.~422--423]{Knuth2011}
	\end{tabular}
\end{center}
\medskip

\noindent
In particular, Knuth \cite[pp.~422--423]{Knuth2011} considers first 
$a_{n-1}$ and then changed $n-1$ to $n$ after deriving the 
corresponding asymptotic approximation. 

The question of whether to use $r$ or $R$ in \eqref{E:n-vs-np1} is
partly answered in \cite[p.~555, footnote]{Flajolet2009}: ``\emph{the
choice being often suggested by computational convenience}.'' Odlyzko
in \cite[p.~1184]{Odlyzko1995} also commented that the use of $r$ is
slightly preferred because the manipulation of the other version is 
less elegant. 

Apart from computational convenience, the numerical advantages of the
expansion \eqref{E:stir-dm} over \eqref{E:stir-cm} are visible
because they have the same sequence of coefficients and $(n+1)^{-m}$
is always smaller than $n^{-m}$; see also \cite{Corless2019} for
Stirling's original expansion in decreasing powers of $n+\frac12$.
Although the numerical difference is minor for most practical uses,
the same question can naturally be raised more generally for
functions $f$ whose Taylor coefficients are amenable to saddle-point
method (for example, exponential of Hayman admissible functions; see
\cite{Odlyzko1985}). Indeed, such a numerical difference was already
observed in the 1960s by Harris and Schoenfeld in their study of
idempotent elements in symmetric semigroups \cite{Harris1967} where
$f(z) = e^{ze^z}$. Based on numerical calculations, they found that
the saddle-point approximation
\begin{align}
	\frac{a_n}{n!} &:= [z^n]e^{ze^z}\nonumber \\
	&\sim \frac{R^{-n}e^{Re^R}}
	{\sqrt{2\pi Re^R(R^2+3R+1)}}
	\eqtext{with $R>0$ solving $R(R+1)e^R=n+1$,} \label{E:Jn}
\end{align}
is ``\emph{considerably better than the approximation}''
\begin{align}\label{E:In}
	\frac{a_n}{n!} 
	\sim \frac{r^{-n}e^{re^r}}
	{\sqrt{2\pi re^r(r^2+3r+1)}}
	\eqtext{with $r>0$ solving $r(r+1)e^r=n$.}
\end{align}
Surprisingly, this is the only paper we found where such a numerical 
comparison between the two versions of saddle-point method was 
made. 

In the same paper \cite{Harris1967}, Harris and Schoenfeld also
argued that the reason that \eqref{E:Jn} outperforms \eqref{E:In} is
(we change their notations to ours) ``\emph{because the derivation of
\eqref{E:Jn} uses a contour which passes through the saddle point of
a certain integral for $a_n/n!$. However, Hayman's proof of the
formula yielding \eqref{E:In} employs a contour passing through
$r=r(n) = R(n-1)$ and it therefore misses the saddle point at $R$ by
$R(n) - R(n-1) \sim 1/n$.}"

However, such a comparison is not quite right. In fact, the use of
$r$ or $R$ in each case, after the change of variables, is optimally
guided by the saddle-point principle, and thus different choices of
integration contour will result in different expansions with
different asymptotic scales. As we will see below in
\S~\ref{S:examples}, while the dominant term in \eqref{E:Jn} is
better than that in \eqref{E:In} under the absolute difference
measure, the use of more terms in the corresponding asymptotic
expansions may change the scenario, and which expansion is
numerically more precise depends on the number of terms used.

In this section, we first consider the two versions of the
saddle-point method for general $f$, and then give succinct
expressions for the coefficients in the corresponding asymptotic
expansions. Then we discuss some examples, highlighting briefly their 
numerical differences. 

\subsection{Two asymptotic expansions by saddle-point method}

Consider
\[
	a_n := [z^n]e^{\phi(z)}
	= \frac1{2\pi i}\oint_{|z|=r} z^{-n-1}e^{\phi(z)}\dd z,
\]
where $\phi$ is analytic at zero. For simplicity we will simply
assume that asymptotic expansions can be derived by the saddle-point
method, instead of formulating general theorems for asymptotic
expansions whose conditions are often very messy to the extent that
in many practical cases, their justifications are not much different
from working out the saddle-point method from scratch; see e.g.
\cite{Harris1968, Moser1957, Odlyzko1985, Wyman1959}. In this way, we
focus on the formal aspects of the coefficients and the differences
between the two expansions, referring all technical details or
justification to standard references such as
\cite{Harris1968,Moser1957,Wyman1959} or
\cite[\S~VIII.5]{Flajolet2009}. Then we have the following two
different asymptotic expansions.
\begin{itemize}
\item The change of variables $z\mapsto re^{i\theta}$ (integration 
on a circle): let 
\begin{align*}
	\kappa_j(r) 
	&:= j![s^j]\lpa{-ns+\phi(re^s)}
	\qquad(j=1,2,\dots),
\end{align*} 
where $\kappa_1(r)=0$ or $r\phi'(r) = n$. Also $\kappa_2(r) = 
r\phi'(r)+r^2\phi''(r)$. Then under proper conditions
\begin{align*}
	a_n 
	\sim \frac{r^{-n}e^{\phi(r)}}{\sqrt{2\pi\kappa_2(r)}}
	\sum_{m\ge0}c_m(r) \kappa_2(r)^{-m},
\end{align*}
where
\begin{align*}
	c_m(r) := \frac1{\sqrt{2\pi}}
	\int_{-\infty}^{\infty}e^{-\frac12 t^2}
	[y^{m}]\exp\llpa{\sum_{j\ge3}
	\frac{\kappa_j(r)}{j!\kappa_2(r)}(it)^j y^{\frac12j-1}}\dd t.
\end{align*}


Alternatively, by the same analysis as above for Stirling's formula,
we also have
\begin{align}\label{E:cm-gen}
	c_m(r) = g_{2m}(r) 
	\frac{(-1)^m(2m)!}{m!2^m}, 
\end{align}
where
\[
	g_m(r) = [v^m]\llpa{\frac{\frac12\kappa_2(r)v^2}
	{\phi(re^v)-\phi(r)-r\phi'(r)v}}^{\frac12(m+1)}.
\]

In particular (with $\kappa_j=\kappa_j(r)$),
\begin{align*}
	c_1(r) &= \frac{3\kappa_2\kappa_4-5\kappa_3^2}
	{24\kappa_2^2},\\
	c_2(r) &= -\frac{24\kappa_2^3\kappa_6
	-168\kappa_2^2\kappa_3\kappa_5
	-105\kappa_2^2\kappa_4^2
	+630\kappa_2\kappa_3^2\kappa_4
	-385\kappa_3^4}{1152\kappa_2^4}.
\end{align*}

\item The change of variables $z\mapsto R(1+it)$ (integration on a 
vertical line): let 
\begin{align*}
	\lambda_j(R)
	&= j![s^j]\lpa{-(n+1)\log(1+s)
	+\phi(R(1+s))}\\
	&= (-1)^j(j-1)!R\phi'(R)+j![s^j]\phi(R(1+s)),
\end{align*}
where $R\phi'(R)= n+1$. Note that $\lambda_2(t)=\kappa_2(t)$. 
Then under proper conditions 
\begin{align*}
	a_n 
	\sim \frac{R^{-n}e^{\phi(R)}}{\sqrt{2\pi\lambda_2(R)}}
	\sum_{m\ge0}d_m(R) \lambda_2(R)^{-m},
\end{align*}
where
\begin{align*}
	d_m(R) := \frac1{\sqrt{2\pi}}
	\int_{-\infty}^{\infty}e^{-\frac12 t^2}
	[y^m]\exp\llpa{\sum_{j\ge3}
	\frac{\lambda_j(r)}
	{j!\lambda_2(r)}(it)^j y^{\frac12j-1}}\dd t.
\end{align*}

Alternatively, 
we also have
\begin{align}\label{E:dm-gen}
	d_m(R)
	= h_{2m}(R) \frac{(-1)^m(2m)!}{m!2^m}, 
\end{align}
where
\[
	h_m(R) = [y^m]\llpa{\frac{\frac12\lambda_2(R)y^2}
	{\phi(R(1+y))-\phi(R)-R\phi'(R)\log(1+y)}}^{\frac12(m+1)}.
\]

In particular, (with $\lambda_j=\lambda_j(R)$)
\begin{align*}
	d_1(R) &= \frac{3\lambda_2\lambda_4-5\lambda_3^2}
	{24\lambda_2^2},\\
	d_2(R) &= -\frac{24\lambda_2^3\lambda_6
	-168\lambda_2^2\lambda_3\lambda_5
	-105\lambda_2^2\lambda_4^2
	+630\lambda_2\lambda_3^2\lambda_4
	-385\lambda_3^4}{1152\lambda_2^4},
\end{align*}
the expressions differing from $c_1(r)$ and $c_2(r)$ by replacing 
all $\kappa_j(r)$ by $\lambda_j(R)$.
\end{itemize}

%
%
%

\subsection{Examples.}\label{S:examples}

We begin with Harris and Schoenfeld's example $\phi(z)=ze^z$
\cite{Harris1967} and let $a_n := n![z^n]e^{ze^z}$, the number of
idempotent mappings from a set of $n$ elements into itself; see also
\cite[\href{https://oeis.org/A000248}{A000248}]{OEIS2022}. Asymptotic
expansions by saddle-point method can be justified either by checking
the Harris-Schoenfeld admissibility conditions as in
\cite{Harris1967} or by showing that $ze^z$ is a Hayman admissible
function (see \cite{Hayman1956, Odlyzko1985, Flajolet2009}). We then
compute the absolute differences between the true values and the two
asymptotic expansions with varying number of terms:
\begin{equation}\label{E:Delta}
\begin{split}
	\Delta_{n,M}^{(c)}
	&:= \frac{n^{M+1}}{(\log n)^{M+1}}
	\left|\frac{a_n}{\displaystyle
	\frac{r^{-n}e^{re^r}}
	{\sqrt{2\pi\kappa_2(r)}}}-
	\sum_{0\le m\le M}c_m(r) \kappa_2(r)^{-m}\right|,\\
	\Delta_{n,M}^{(v)}
	&:= \frac{n^{M+1}}{(\log n)^{M+1}}
	\left|\frac{a_n}{\displaystyle
	\frac{R^{-n}e^{Re^R}}
	{\sqrt{2\pi\kappa_2(R)}}}-
	\sum_{0\le m\le M}d_m(R) \kappa_2(R)^{-m}\right|,
\end{split}
\end{equation}
where $r>0$ solves $r(r+1)e^r=n$, $R>0$ solves $R(r+1)e^R=n+1$, and 
$\kappa_2$ and the coefficients $c_m$ and $d_m$ can be computed by 
\eqref{E:cm-gen} and \eqref{E:dm-gen}, respectively, with $\phi(z) = 
ze^z$. Note that $g(2m)\kappa_2(r)^{-m}$ grows in the order 
$n^{-m}(\log n)^m$ for $m=0,1,\dots$.
\begin{figure}[!ht]
	\begin{center}
	\begin{tabular}{ccccc}
		\includegraphics[height=2.5cm]{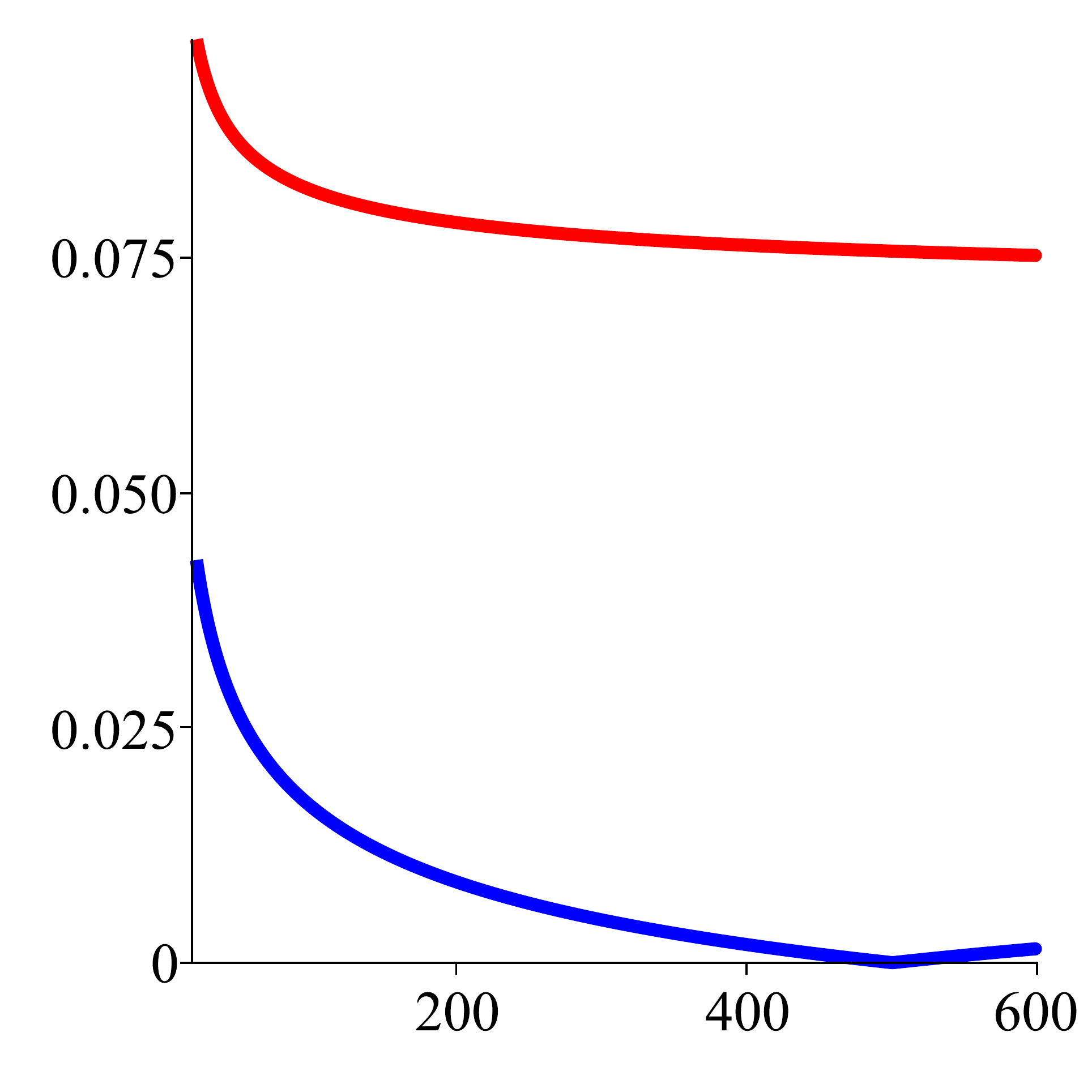}
		& \includegraphics[height=2.5cm]{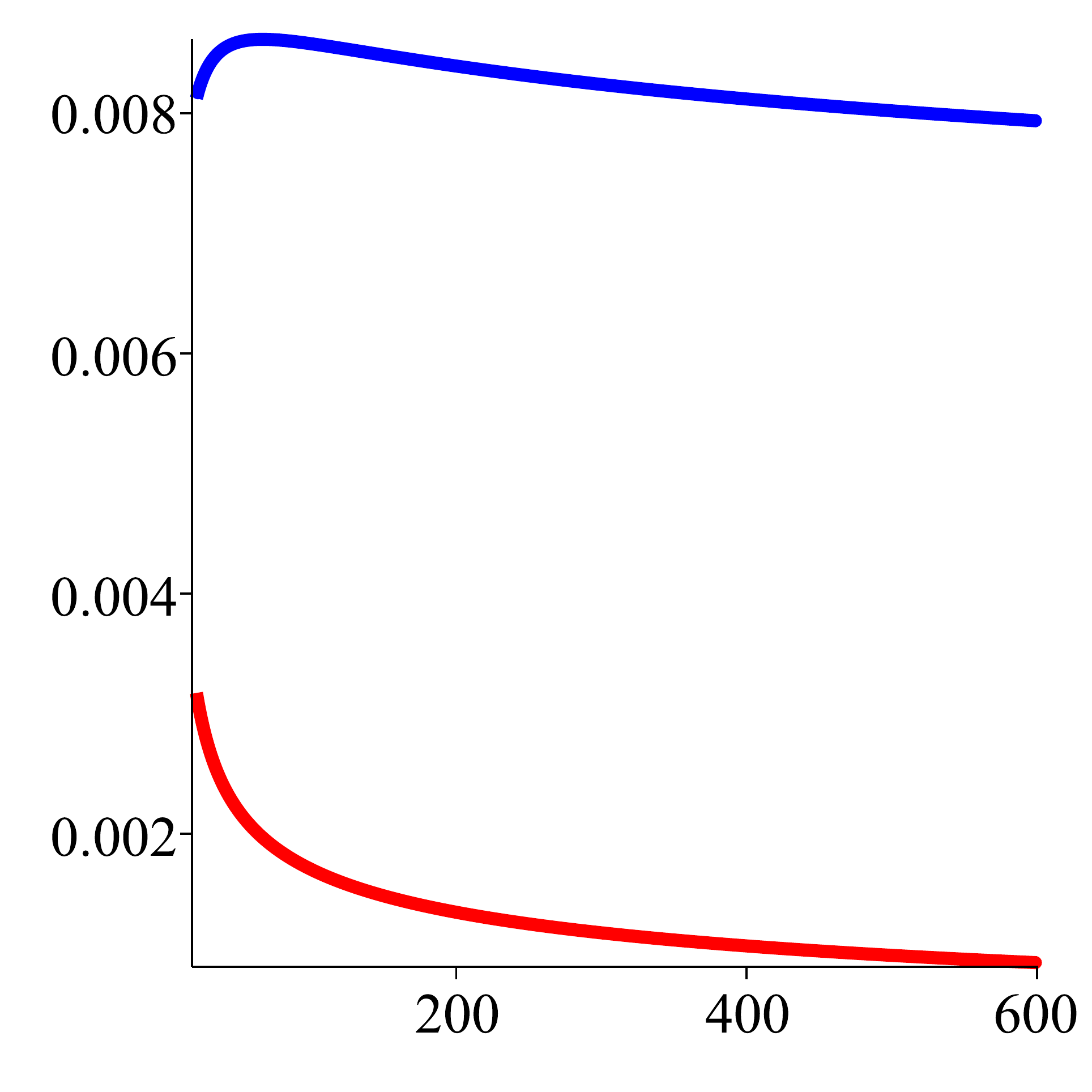}
		& \includegraphics[height=2.5cm]{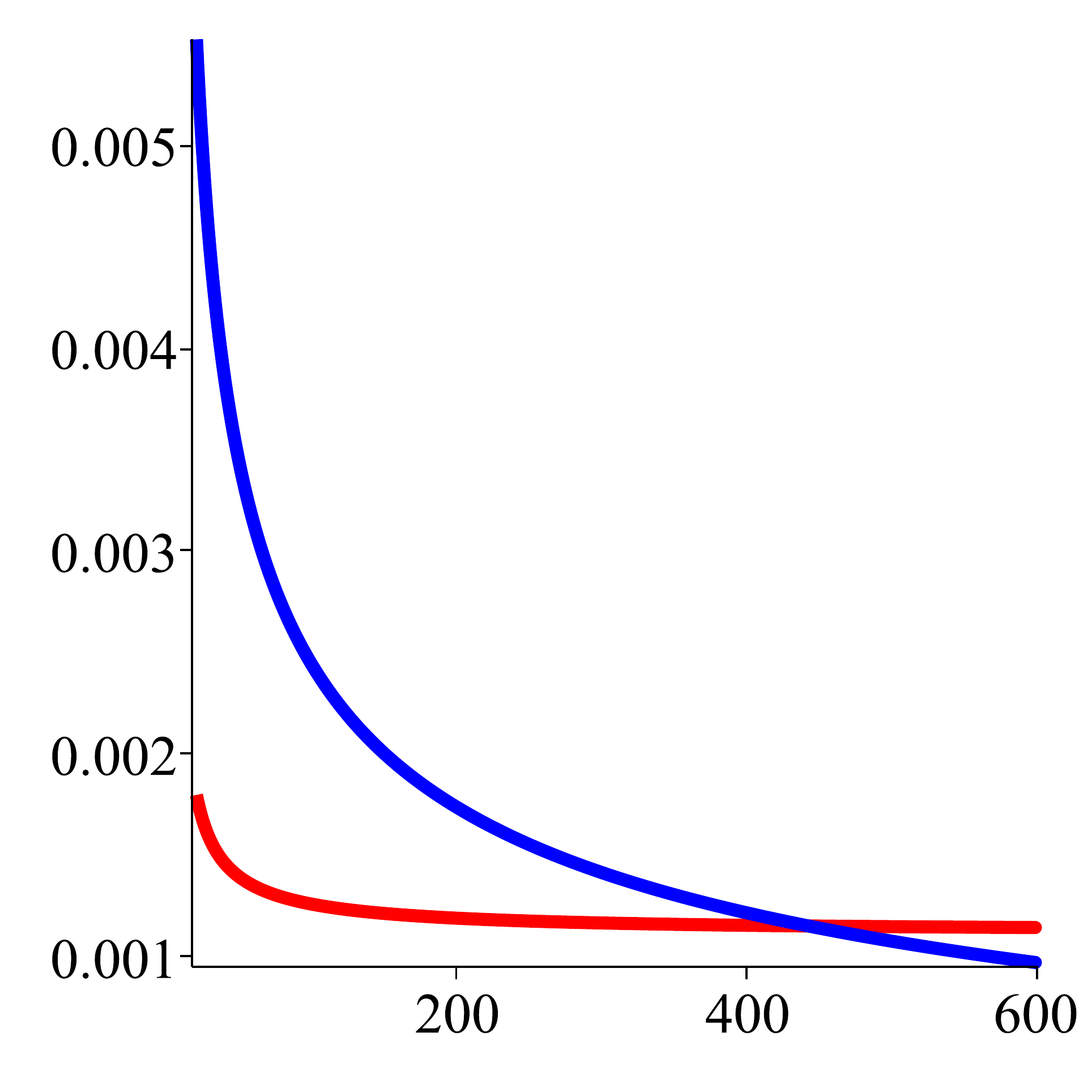}
		& \includegraphics[height=2.5cm]{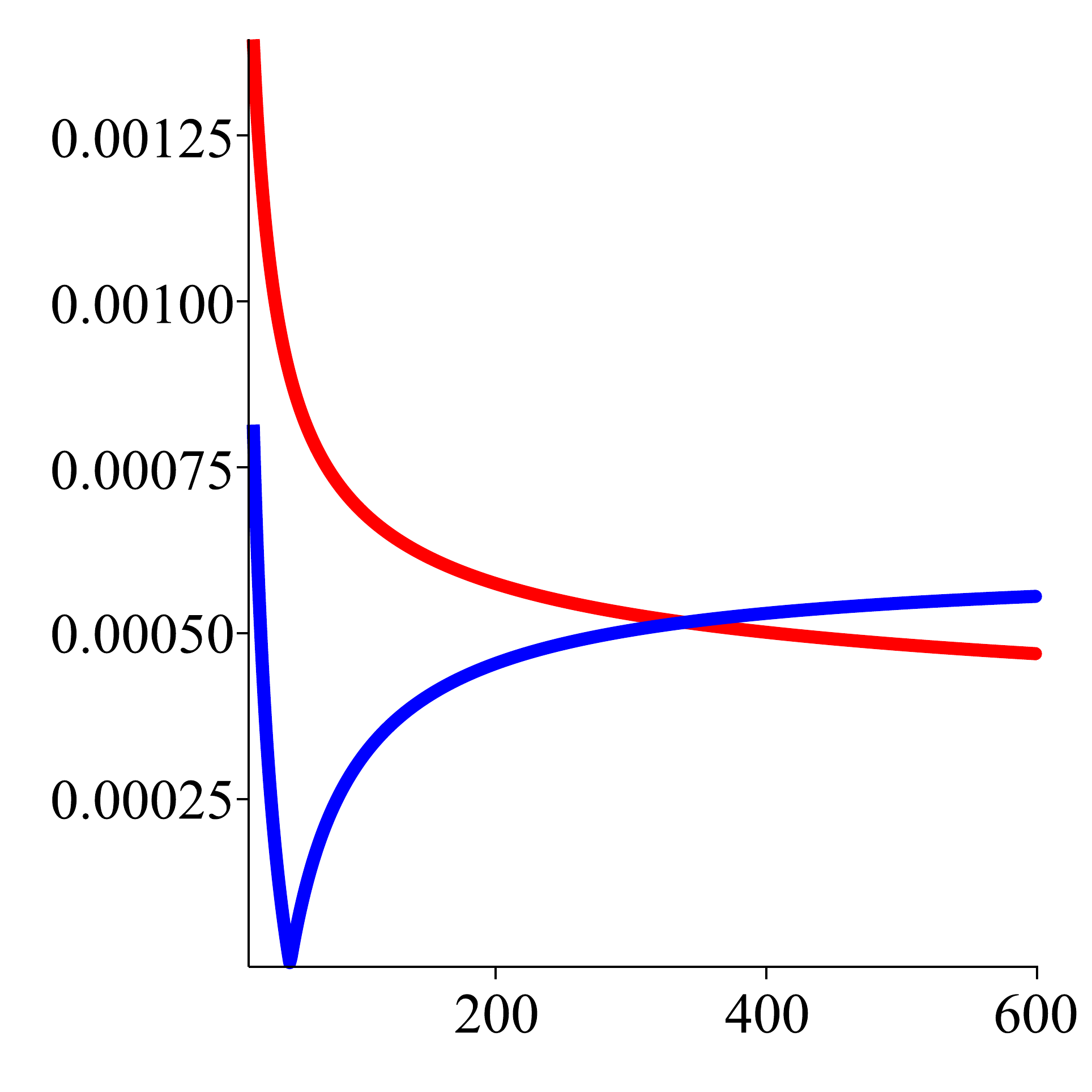}
		& \includegraphics[height=2.5cm]{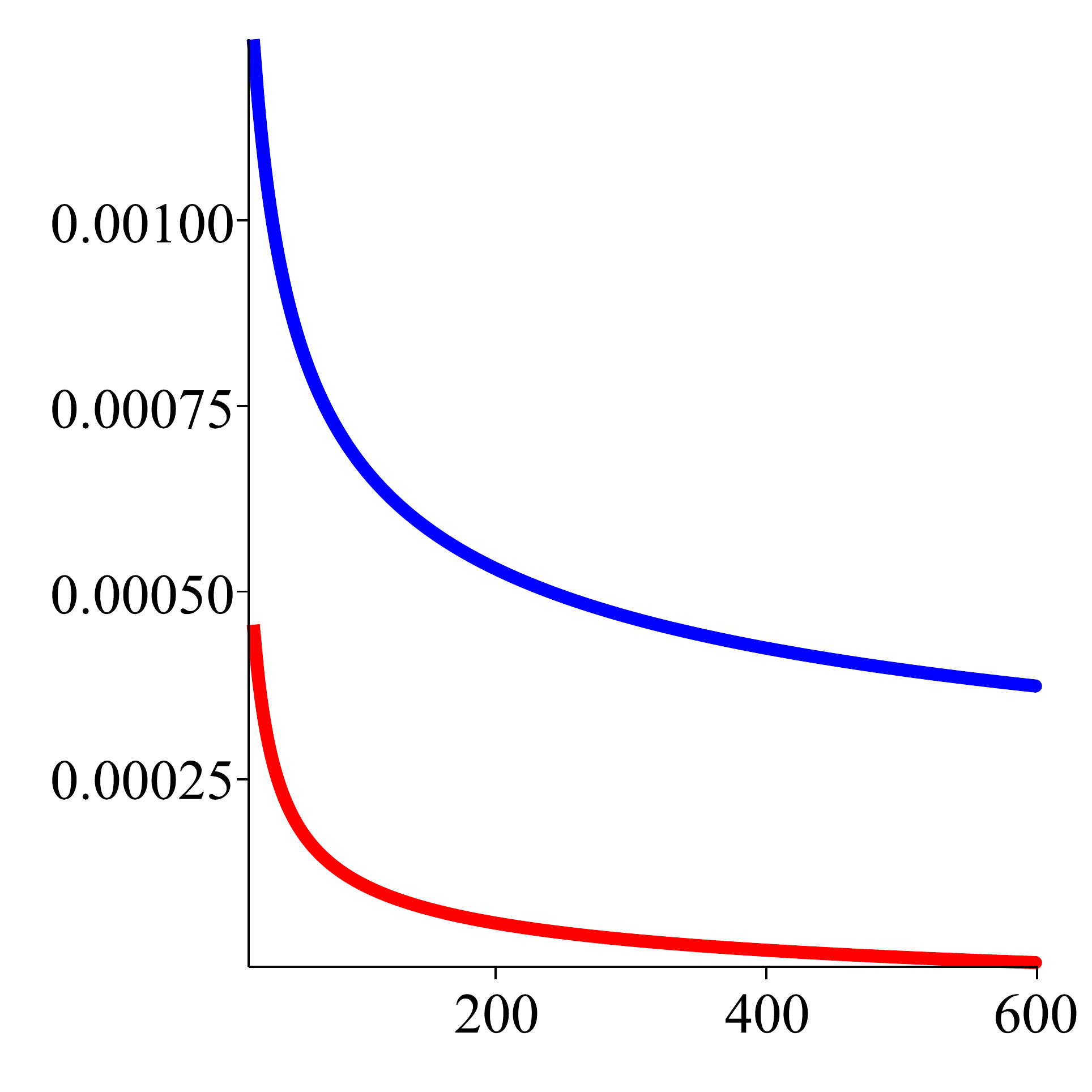} 
	\end{tabular}	
	\caption{$\Delta_{n,M}^{(c)}$ (in red) vs $\Delta_{n,M}^{(v)}$ 
	(in blue): $20\le n\le 200$ and $M=0,1,2,3,4$ (in left to right 
	order).}\label{F:hs}
	\end{center}
\end{figure}
From Figure~\ref{F:hs}, we see that while \eqref{E:Jn} is numerically 
better than its circular counterpart \eqref{E:In} (or $M=0$. as 
already observed in \cite{Harris1967}), more terms in the asymptotic 
expansions show that both expansions are indeed comparable, and their 
numerical performance depends on the number of terms used. 

We also observed a very similar pattern (as Figure~\ref{F:hs}) for 
Bell numbers when $\phi(z) = e^z-1$; see 
\cite[\href{https://oeis.org/A000110}{A000110}]{OEIS2022}.
\begin{figure}[!ht]
	\begin{center}
	\begin{tabular}{ccccc}
		\includegraphics[height=2.5cm]{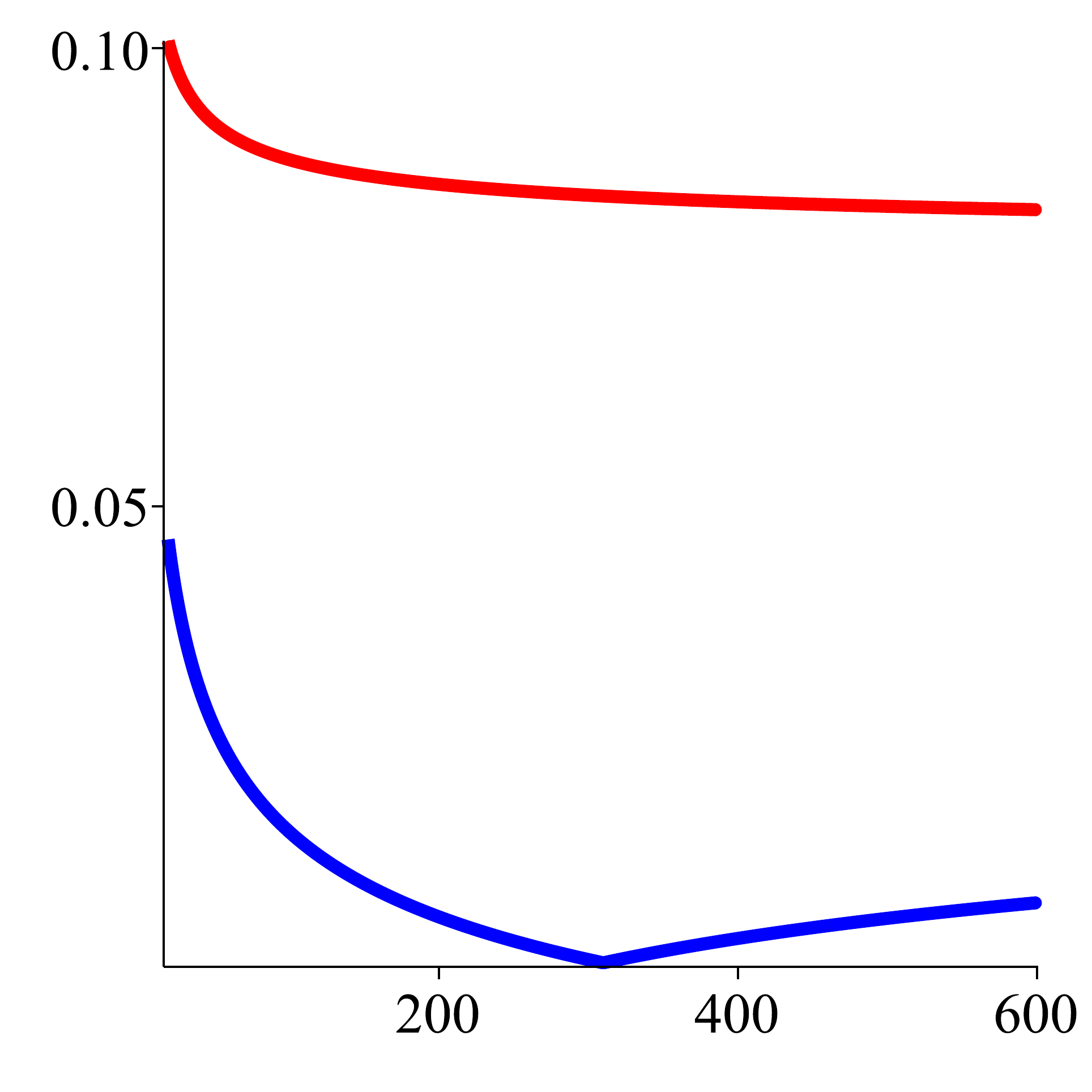}
		& \includegraphics[height=2.5cm]{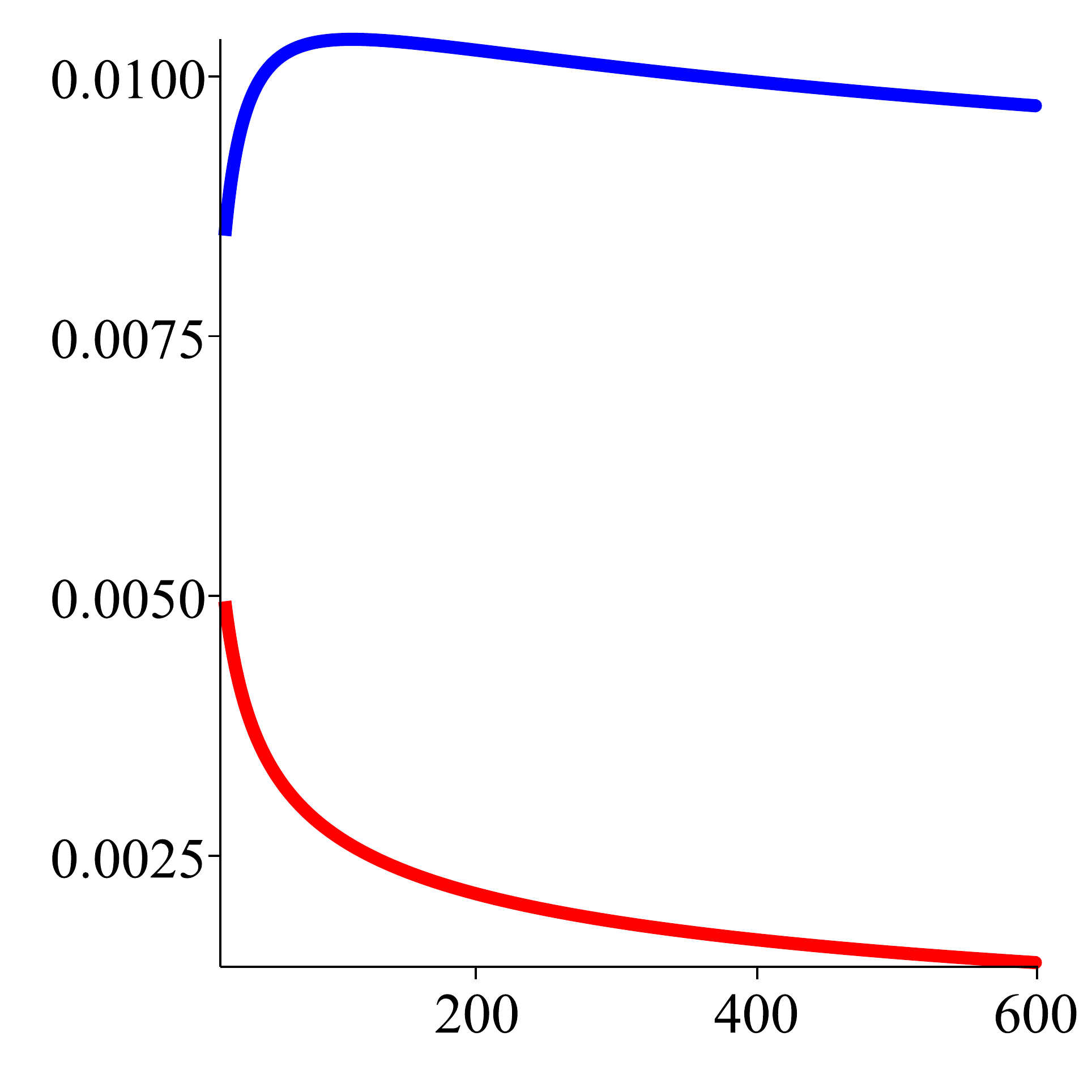}
		& \includegraphics[height=2.5cm]{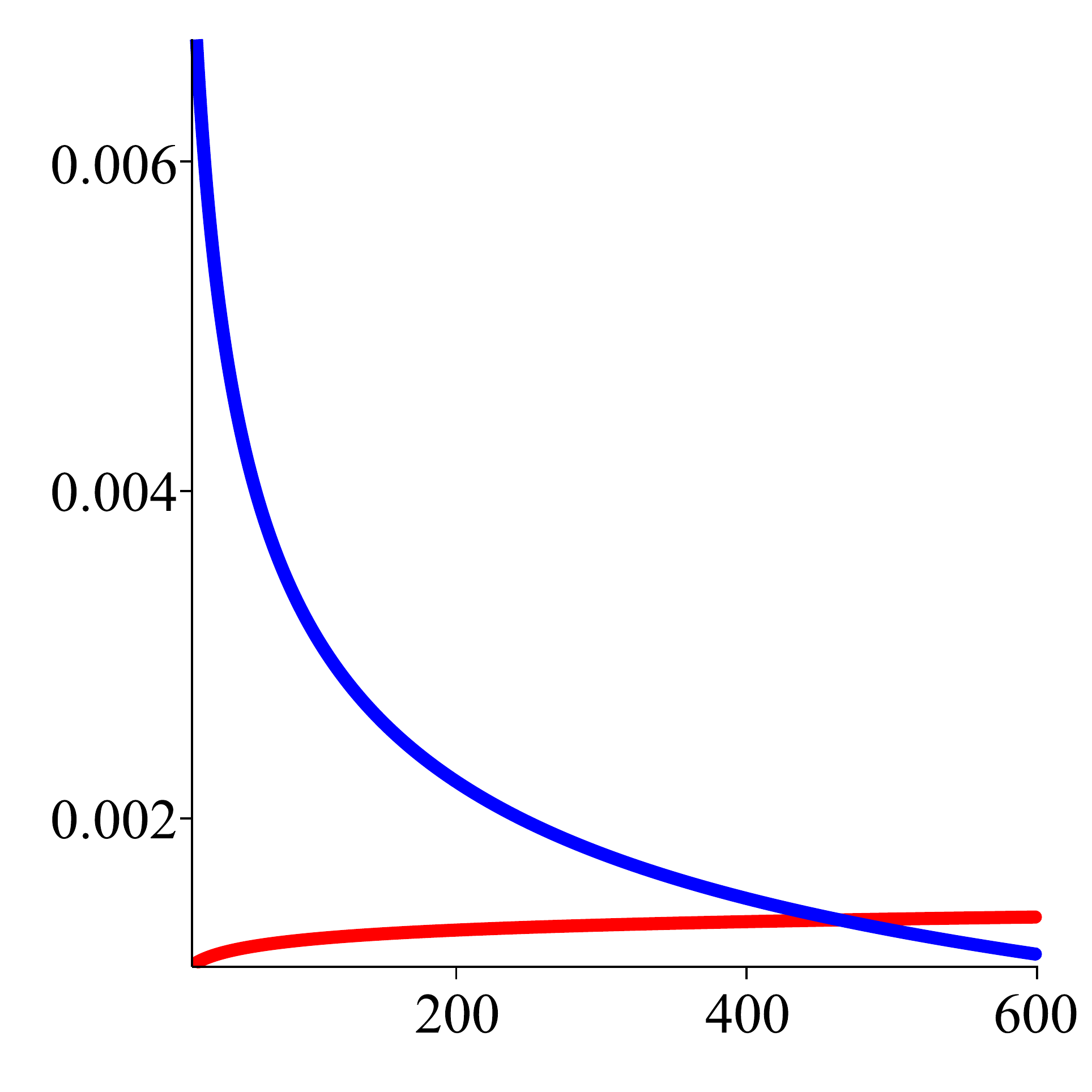}
		& \includegraphics[height=2.5cm]{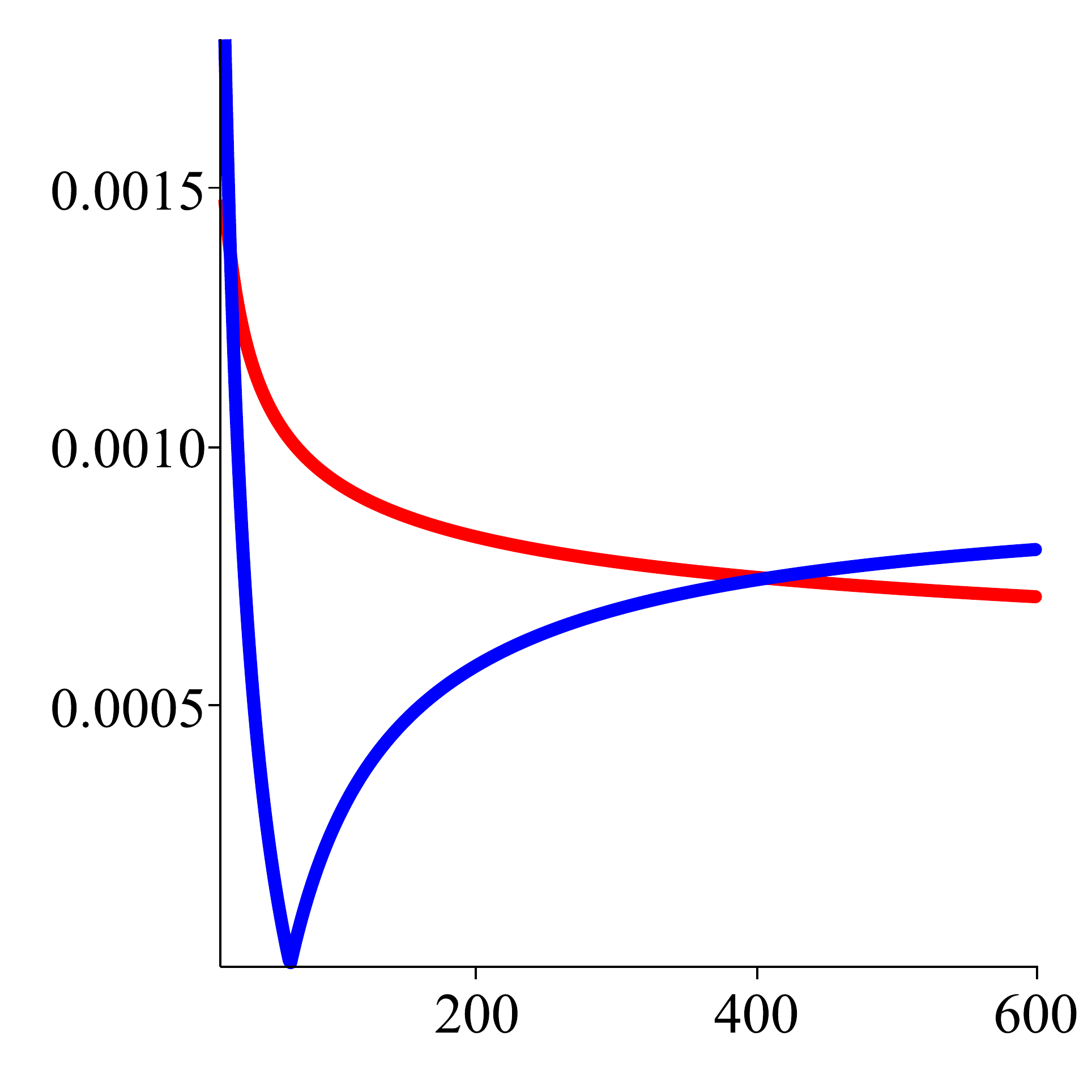}
		& \includegraphics[height=2.5cm]{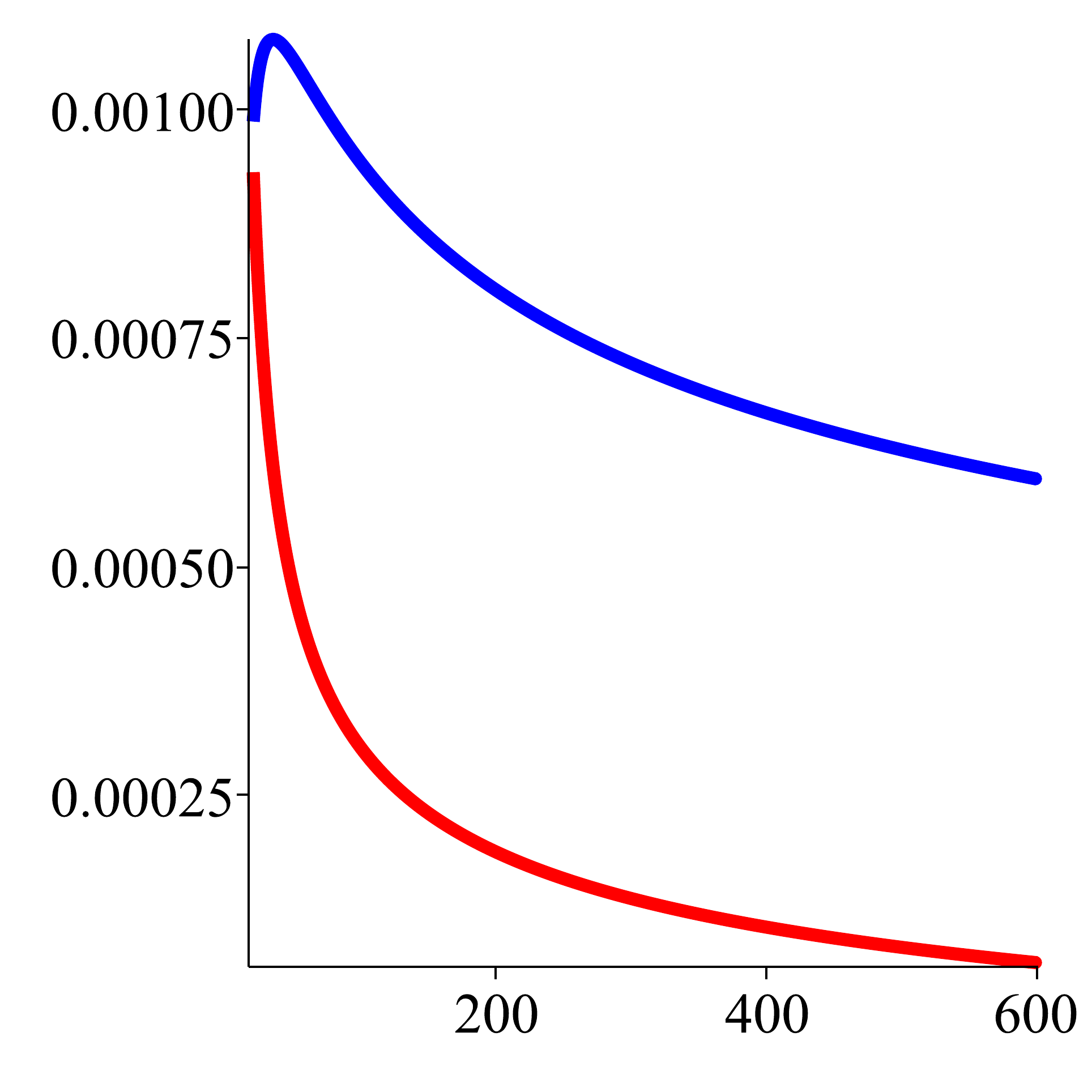} 
	\end{tabular}	
	\caption{$\Delta_{n,M}^{(c)}$ (in red) vs $\Delta_{n,M}^{(v)}$ 
	(in blue) in the case of Bell numbers: $15\le n\le 200$ and 
	$M=0,1,2,3,4$ (in left to right order).}
	\end{center}
\end{figure}

%
%
%
%

In the case of $\phi(z) = \frac{z}{1-z}$, $a_n := n![z^n]e^{z/(1-z)}$ 
enumerates the number of partitions of $\{1,\dots,n\}$ into any 
number of ordered subsets; see 
\cite[\href{https://oeis.org/A000262}{A000262}]{OEIS2022}. 
Justification of an asymptotic expansion is straightforward and 
similar to integer partition problems.
\begin{figure}[!ht]
	\begin{center}
	\begin{tabular}{ccccc}
		\includegraphics[height=2.5cm]{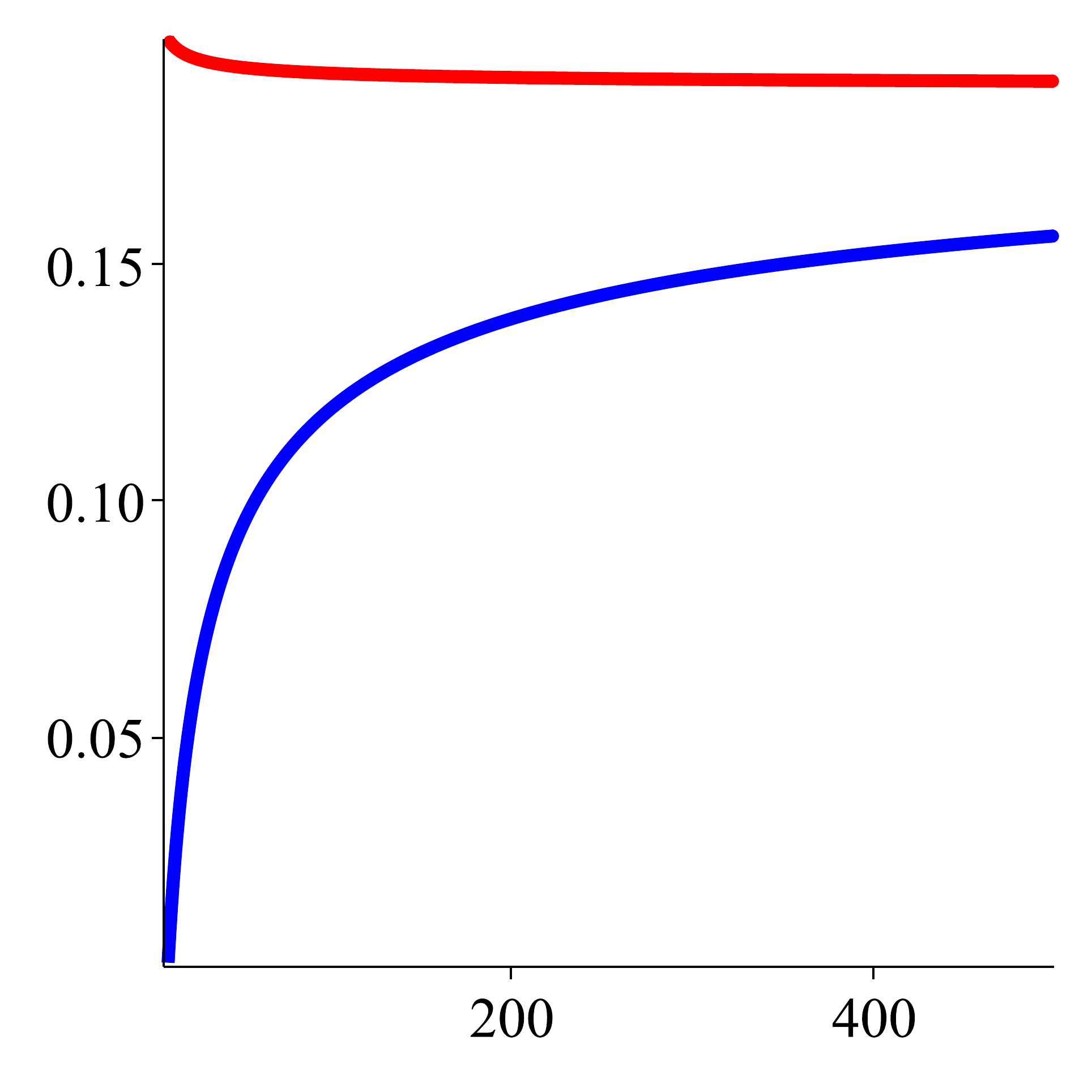}
		& \includegraphics[height=2.5cm]{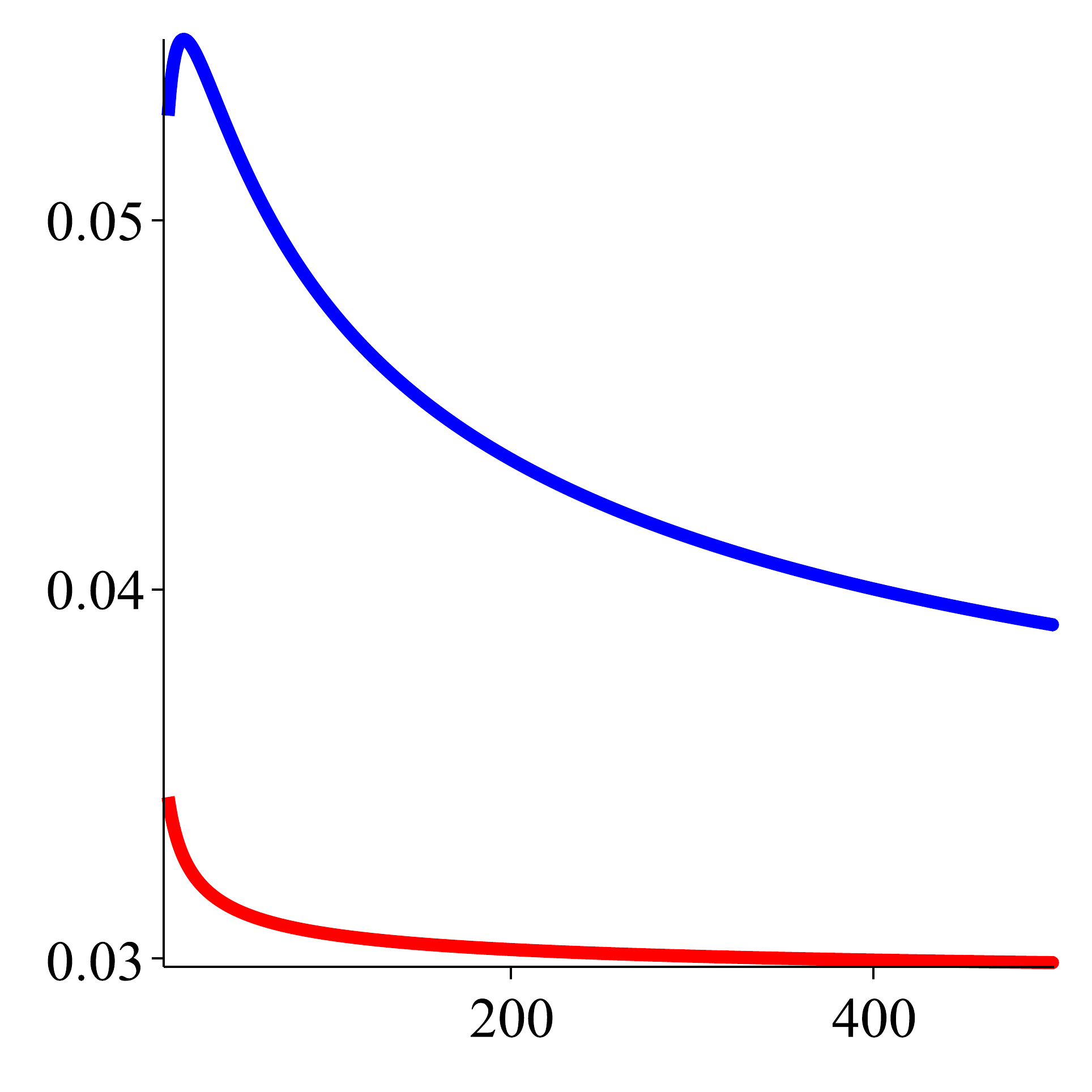}
		& \includegraphics[height=2.5cm]{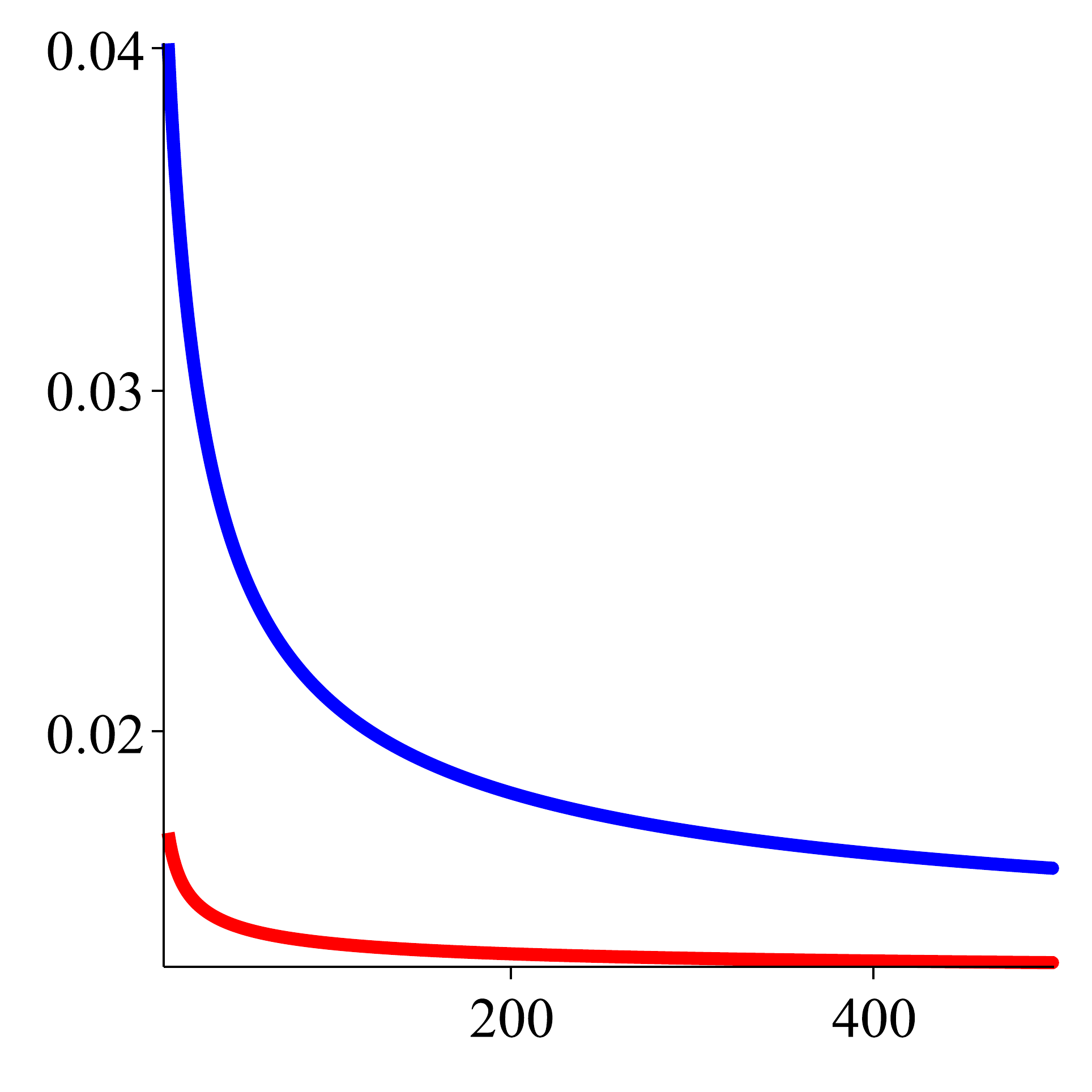}
		& \includegraphics[height=2.5cm]{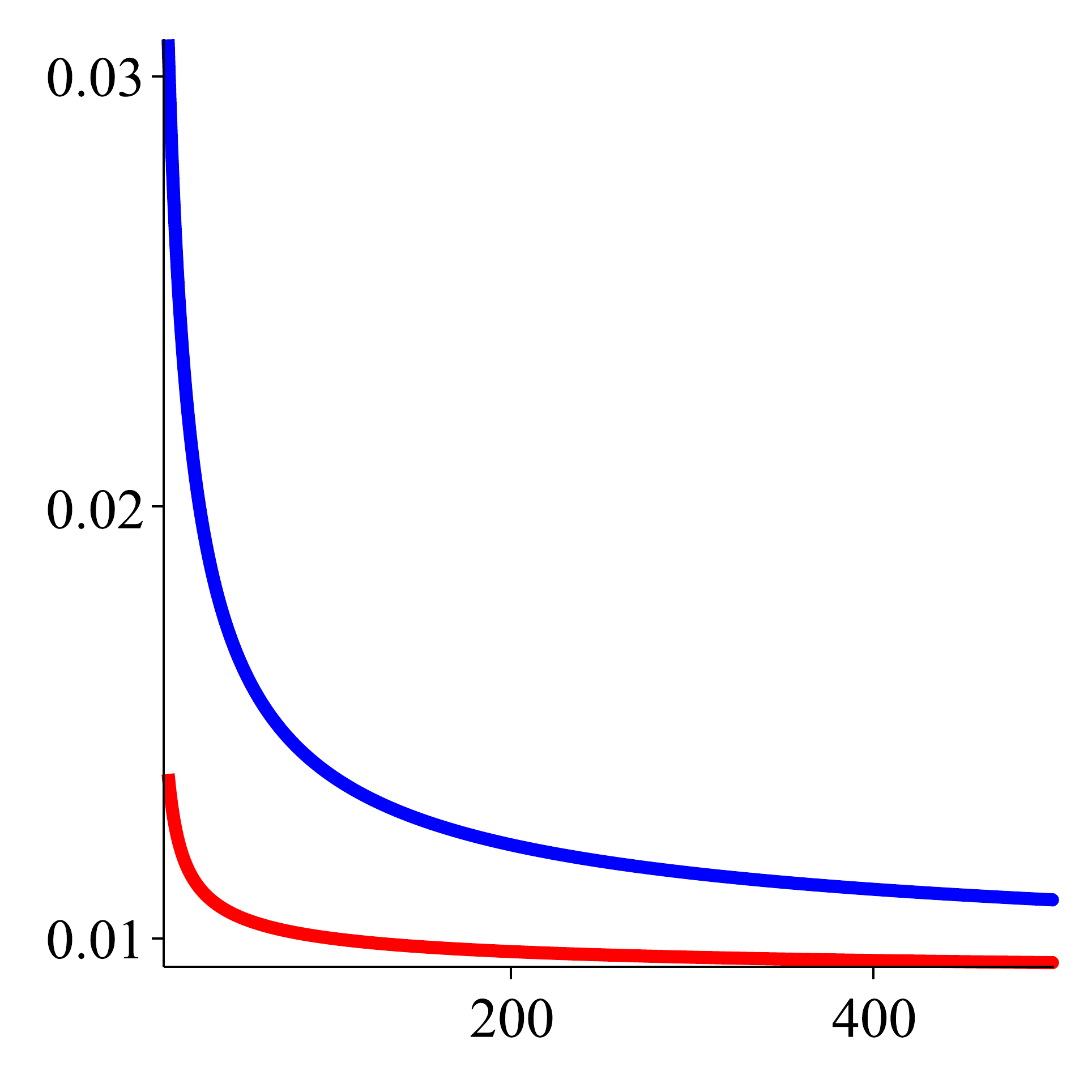}
		& \includegraphics[height=2.5cm]{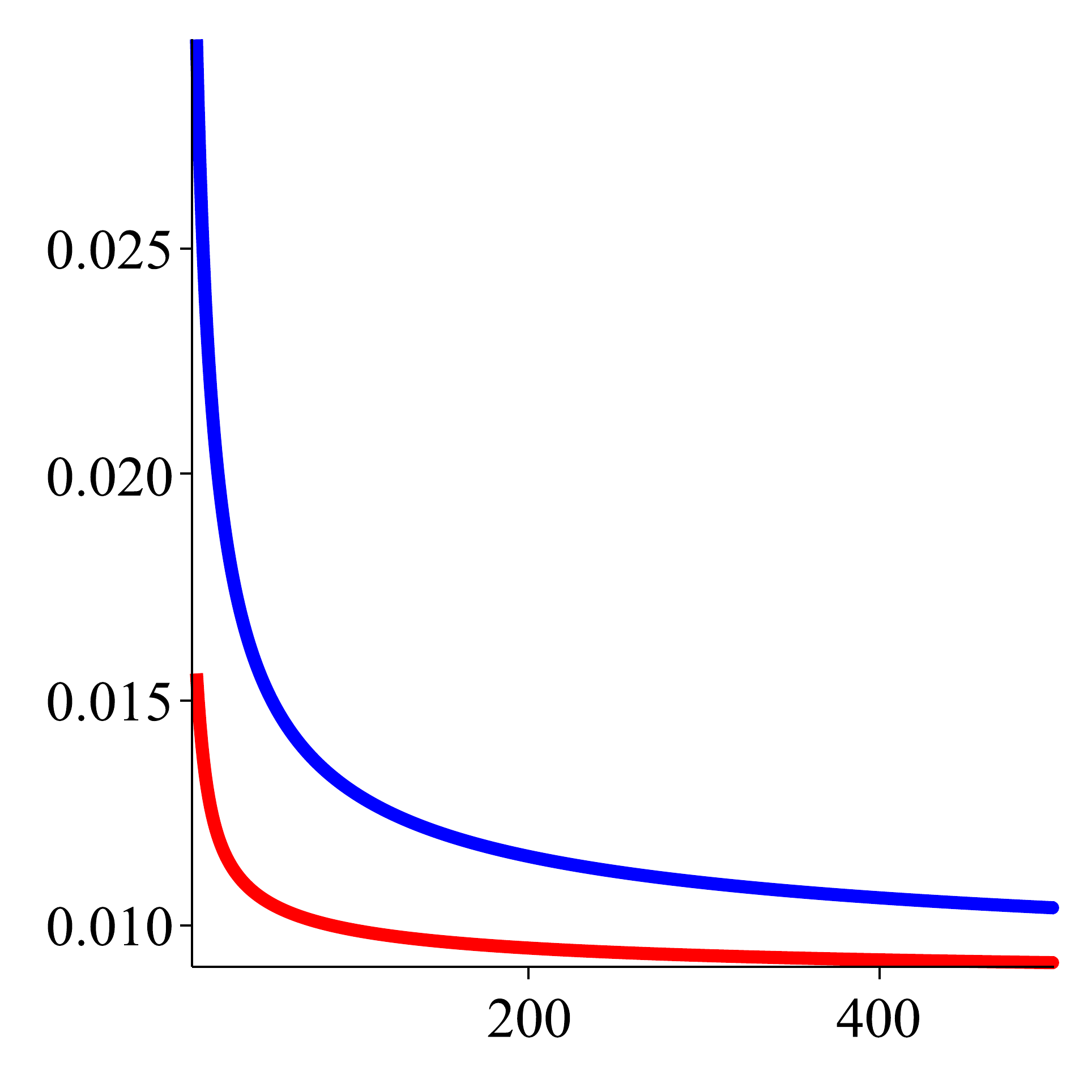} 
	\end{tabular}	
	\caption{$\Delta_{n,M}^{(c)}$ (in red) vs $\Delta_{n,M}^{(v)}$ 
	(in blue) in the case of $\phi(z)=\frac{z}{1-z}$: $10\le n\le 
	200$ and $M=0,1,2,3,4$ (in left to right order). Here 
	$\Delta_{n,M}^{(\cdot)}$ is defined as in \eqref{E:Delta} but 
	with $\frac{n^{M+1}}{(\log n)^{M+1}}$ there replaced by 
	$n^{\frac12(M+1)}$.}
	\end{center}
\end{figure}
One sees that the circular version is numerically better except for 
$M=0$.

Finally, consider the case $\phi(z) = z+\frac12z^2$, whose 
coefficients (times $n!$) enumerate the number of self-inverse 
permutations on $n$ elements; see 
\cite[\href{https://oeis.org/A000085}{A000085}]{OEIS2022}. Since all 
coefficients of $\phi(z)$ are positive, an asymptotic expansion by 
saddle-point method is possible by known results of Moser and Wyman 
in the 1950s \cite{Moser1957}; see also \cite{Odlyzko1995}. In this 
case, we plot the difference $\Delta_{n,M}^{(c)}-\Delta_{n,M}^{(v)}$ 
because the two curves are too close to be distinguishable.
\begin{figure}[!ht]
	\begin{center}
	\begin{tabular}{ccccc}
		\includegraphics[height=2.8cm]{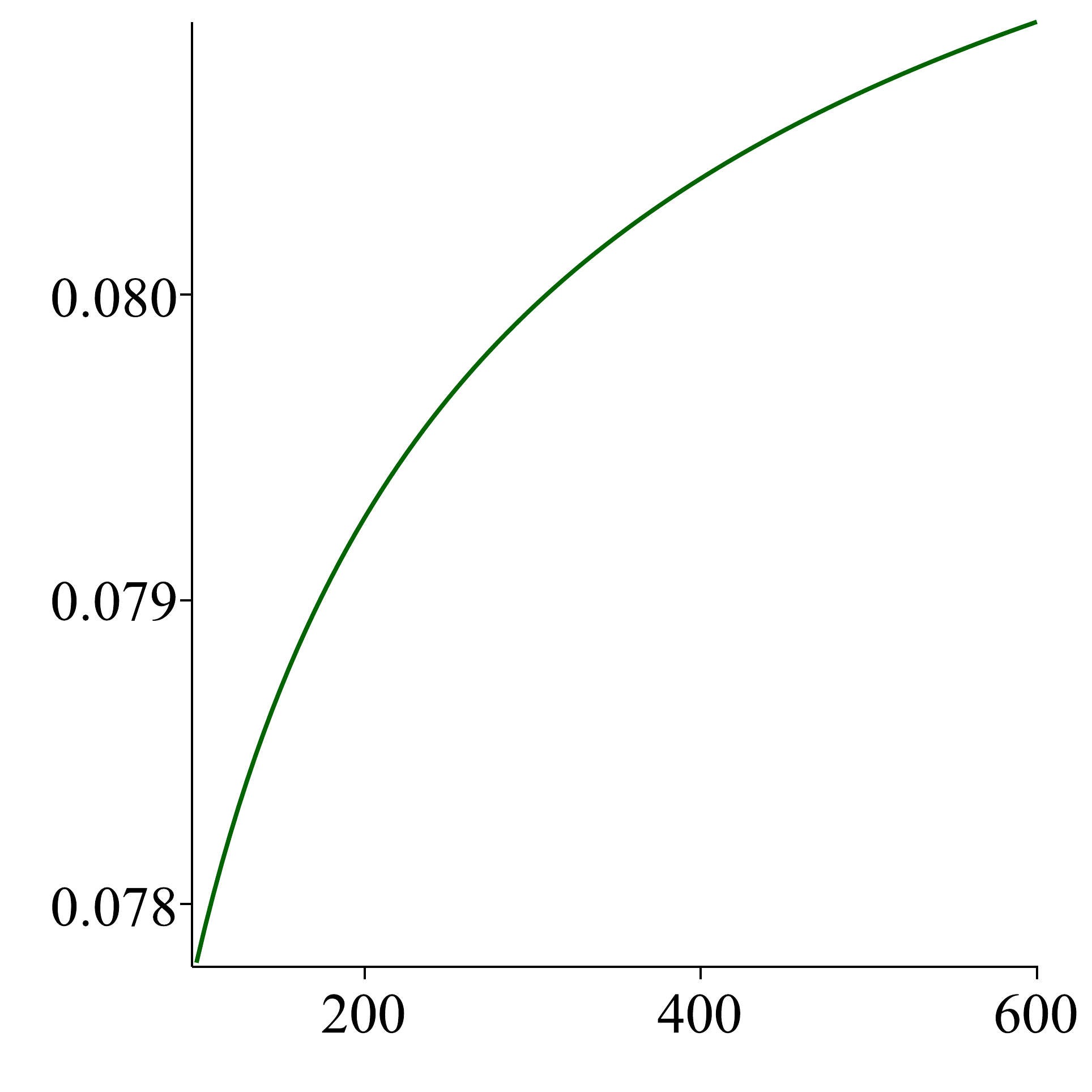}
		& \includegraphics[height=2.8cm]{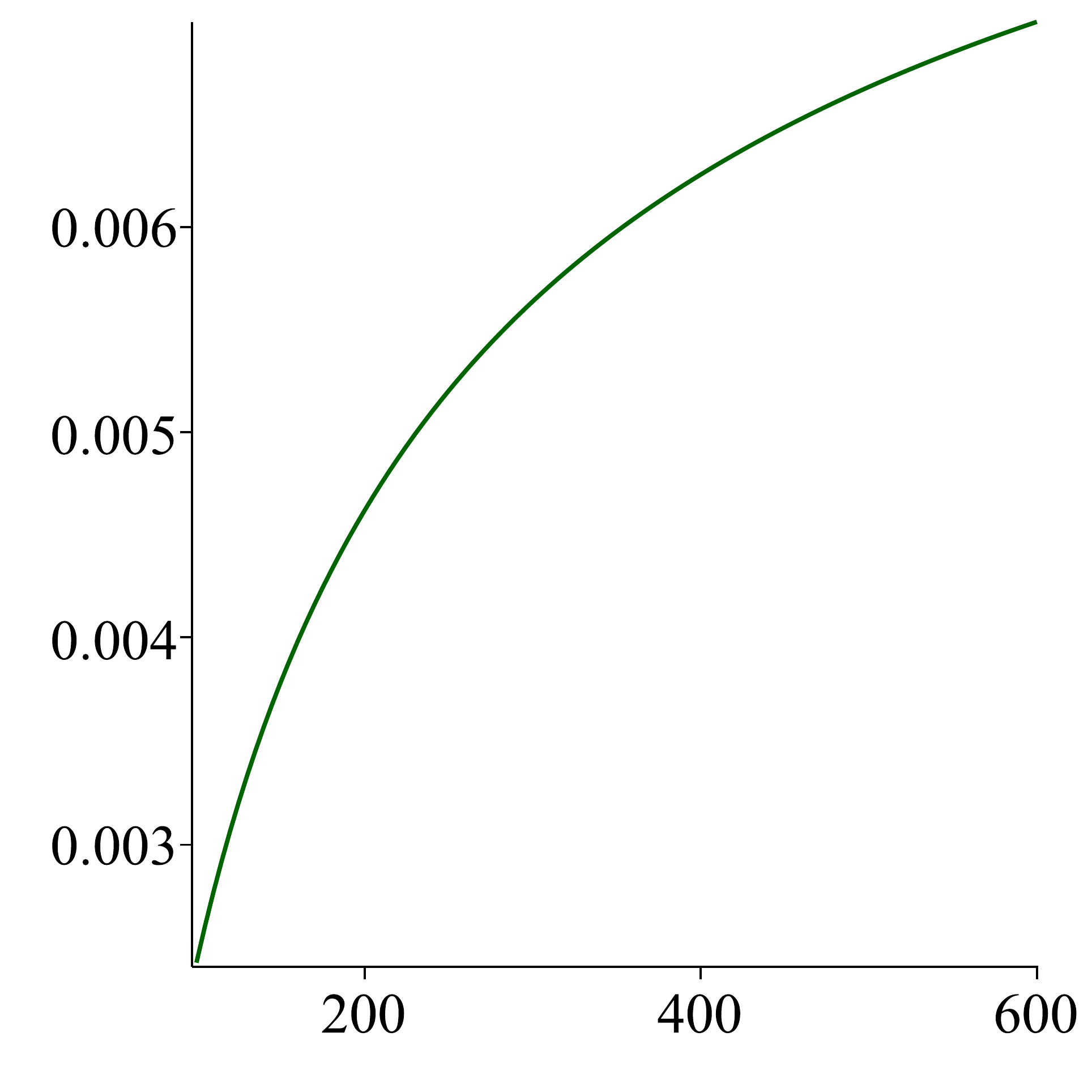}
		& \includegraphics[height=2.8cm]{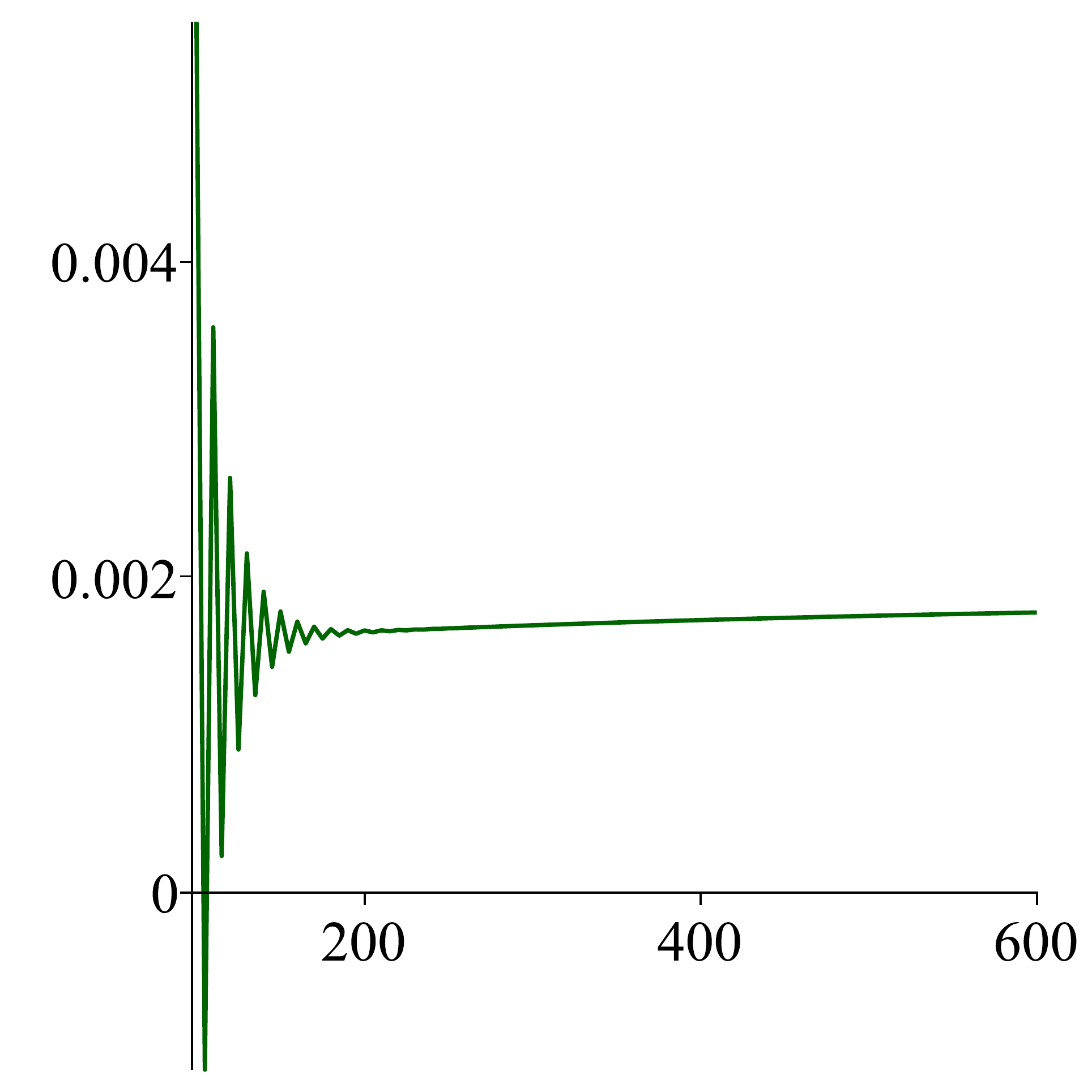}
		& \includegraphics[height=2.8cm]{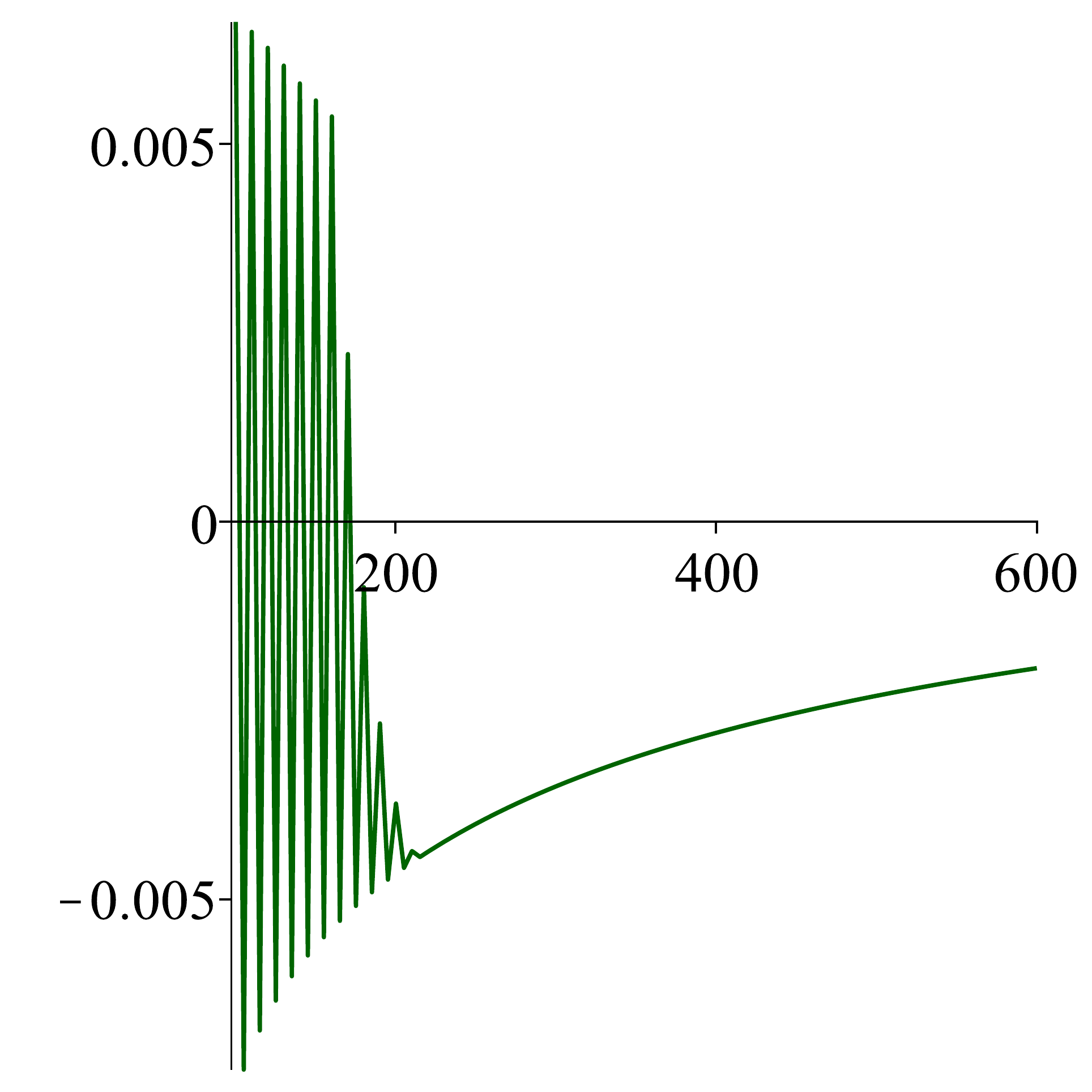}
	\end{tabular}	
	\caption{$\Delta_{n,M}^{(c)}-\Delta_{n,M}^{(v)}$ 
	in the case of $\phi(z)=z+\frac12z^2$: $100\le n\le 
	200$ (with step 5) and $M=0,1,2,3$ (in left to right order). 
	Here $\Delta_{n,M}^{(\cdot)}$ is defined as in \eqref{E:Delta} 
	but with $\frac{n^{M+1}}{(\log n)^{M+1}}$ there replaced by 
	$n^{M+1}$.}
	\end{center}
\end{figure}

In summary, we clarified here the often unclear situation of which
version of the saddle-point contour to choose, and provided concrete
examples for a more detailed comparison, from both analytic and
numerical viewpoints. Such a clarification will be of instructional
value, in addition to its own methodological interests.

\section{A Lagrangean framework}
\label{S:lag}

Consider now the Lagrangean form
\[
	[z^n]f(z)\eqtext{with}
	f=zG(f),
\]
where $G(0)>0$. By Lagrange inversion relation, the Taylor 
coefficients satisfy 
\begin{align}\label{E:lag-inv}
	n[z^n]f(z) = [t^{n-1}]G(t)^n\qquad(n\ge1).
\end{align}
This is one of the rare classes of functions for which both the 
singularity analysis and the saddle-point method apply well 
(see \cite[p.~590]{Flajolet2009} and \cite{Hwang2018}) because of 
\eqref{E:lag-inv}. Under the following sub-criticality conditions:
\begin{equation}\label{E:sub-critic}
	\left\{
	\begin{split}
		&\bullet 
		\text{$G$ is analytic in $|z|<\rho$, $0<\rho<\infty$;}\\
		&\bullet
		\text{$[z^j]G(z)\ge0$ and $\gcd\{j\,:\,[z^j]G(z)>0\}=1$;}\\
		&\bullet
		\text{the equation $zG'(z)=G(z)$ has a unique positive 
		solution $\rho_0\in(0,\rho)$}
	\end{split}\right.
\end{equation}
%
it is proved in \cite{Hwang2018} via singularity analysis that
\[
	[z^n]f(z) \sim \sum_{k\ge0}c_k\binom{n-k-\frac32}{n},
	\eqtext{with} c_k = \frac{(-1)^k}{k}[t^{k-1}]
	\llpa{\frac{1-\frac{(\rho+t)G(\rho)}
	{\rho G(\rho+t)}}{t^2}}^{-\frac12k},
\]
where $\rho := \frac{r}{G(r)}$ with $r>0$ solving the equation 
$rG'(r)=G(r)$.

Here we examine this framework from the saddle-point method
viewpoint. It turns out that the two asymptotic expressions we
obtained above via two different contours are the same in this
framework, and they are related to each other by a direct change of
variables.
\begin{thm} Write $\phi(z) = \log G(z)$. \
Under the subcriticality conditions \eqref{E:sub-critic},
\begin{align}\label{E:lag2}
	n[z^n]f(z)
	\sim \frac{R^{1-n}G(R)^n}
	{\sqrt{2\pi n}\,\sigma(R)}
	\sum_{m\ge0}h_{2m}\frac{(-1)^m(2m)!}
    {2^mm!}\, (\sigma(R)^2n)^{-m},
\end{align}
where $R>0$ solves the equation $R\phi'(R)=1$, 
$\sigma(R)^2 = R\phi'(R)+R^2\phi''(R)$ and 
\begin{align}\label{E:lag-hm}
	h_m = [v^m]\llpa{\frac{\frac12\sigma(R)^2v^2}
	{\phi(R(1+v))-\phi(R)-R\phi'(R)
    \log(1+v)}}^{\frac12(m+1)}.
\end{align}
\end{thm}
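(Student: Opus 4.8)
The plan is to combine the Lagrange inversion relation~\eqref{E:lag-inv} with the vertical-line ($z\mapsto R(1+it)$) form of the saddle-point method set up in Section~2, the computation then running parallel to the proof of Theorem~\ref{T:am-bm}. First I would recast, using~\eqref{E:lag-inv} and $\phi=\log G$,
\[
	n[z^n]f(z) = [t^{n-1}]G(t)^n
	= \frac1{2\pi i}\oint_{|t|=R}\frac{G(t)^n}{t^n}\dd t
	= \frac1{2\pi i}\oint_{|t|=R}e^{n(\phi(t)-\log t)}\dd t,
\]
turning the problem into a large-power coefficient integral of exactly the Stirling type, the exponent $n(\phi(t)-\log t)$ playing the role of $n(e^u-1-u)$ in~\eqref{E:sd1}. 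Its saddle-point equation $\tfrac{\dd}{\dd t}(\phi(t)-\log t)=0$ reads $t\phi'(t)=1$, i.e.\ $tG'(t)=G(t)$, whose unique positive root in $(0,\rho)$ is the point $R=\rho_0$ furnished by~\eqref{E:sub-critic}; moreover $\sigma(R)^2=R\phi'(R)+R^2\phi''(R)=R\,\tfrac{\dd}{\dd t}\bigl(t\phi'(t)\bigr)\big|_{t=R}>0$, since $t\mapsto t\phi'(t)=tG'(t)/G(t)$ is strictly increasing on $(0,\rho)$ under~\eqref{E:sub-critic}, so that $R$ is a non-degenerate saddle.

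Next I would localize and change variables. The positivity and $\gcd$ conditions in~\eqref{E:sub-critic} give $|G(Re^{i\theta})|<G(R)$ for $0<|\theta|\le\pi$, so that $|e^{n(\phi(t)-\log t)}|=R^{-n}|G(t)|^n$ attains its maximum on $|t|=R$ only at $t=R$ and the arc $\ve\le|\theta|\le\pi$ contributes $O(e^{-\ve n})$. On the remaining arc I would put $t=R(1+v)$ and then perform the analytic change of variables
\[
	\phi(R(1+v))-\phi(R)-R\phi'(R)\log(1+v)=\tfrac12\sigma(R)^2y^2,
\]
with $y=y(v)$ analytic near $0$ and $y'(0)=1$ (legitimate since $R<\rho$ and $\sigma(R)^2>0$). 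This is precisely the substitution underlying the $\lambda_j$ and $h_m$ of the vertical-line expansion of Section~2, here adapted to the integral above whose saddle equation is $R\phi'(R)=1$: one checks $j![v^j]\bigl(\phi(R(1+v))-\log(1+v)\bigr)=R^j\phi^{(j)}(R)+(-1)^j(j-1)!$, the analogue of $\lambda_j(R)$ with $R\phi'(R)=1$, while the denominator above is exactly the one in~\eqref{E:lag-hm}.

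Then I would finish as in the proof of Theorem~\ref{T:am-bm}: Lagrange inversion (as for~\eqref{E:hm}) identifies $[y^m]\tfrac{\dd v}{\dd y}$ with the $h_m$ of~\eqref{E:lag-hm}, and Watson's lemma, through the Gaussian moments
\[
	\int_{-\infty}^{\infty}e^{-\frac12\sigma(R)^2 n\,y^2}(iy)^{2m}\dd y
	=(-1)^m\sqrt{\frac{2\pi}{\sigma(R)^2 n}}\,
	\frac{(2m)!}{2^m m!}\,(\sigma(R)^2 n)^{-m}
\]
together with the vanishing of odd powers of $y$ by parity, assembles the expansion into~\eqref{E:lag2}: the factor $R^{1-n}$ comes from $\dd t=R\dd v$, the factor $G(R)^n$ from $e^{n\phi(R)}$, and $\bigl(\sqrt{2\pi n}\,\sigma(R)\bigr)^{-1}$ from the Gaussian normalization. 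I would also record that the same integral, parametrized instead circularly by $t=Re^{w}$, yields the circular-type coefficients $g_{2m}$ of~\eqref{E:cm-gen} at the same point $R$, the two local parametrizations being related by the direct substitution $1+v=e^{w}$ (cf.\ the move from~\eqref{E:gm} to~\eqref{E:phi-s}); the term-by-term coincidence of the two expansions---the phenomenon announced before the statement---is then forced, since both compute the single integral $\tfrac1{2\pi i}\oint_{|t|=R}e^{n(\phi(t)-\log t)}\dd t$, and it reduces to the identity $g_{2m}=h_{2m}$ established for the Stirling weights in~\eqref{E:gh} by the computation with $\varphi$ there.

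The step I expect to be the main obstacle is the rigorous verification that the saddle-point method applies---the strict maximum on the circle and the analyticity of the local change of variables---precisely the ``messy conditions'' deliberately bypassed in Section~2. Here, however, they are genuinely available, since the subcriticality conditions~\eqref{E:sub-critic} pin down a non-degenerate saddle $R\in(0,\rho)$ together with $|G(Re^{i\theta})|<G(R)$ off the positive axis; the rest is the routine bookkeeping already carried out for Stirling's formula.
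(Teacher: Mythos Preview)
Your argument is correct and follows essentially the same path as the paper: Lagrange inversion \eqref{E:lag-inv} turns the problem into a large-power coefficient integral, the subcriticality conditions localize it at the non-degenerate saddle $R$ with $R\phi'(R)=1$, the analytic change of variables to a quadratic exponent plus Lagrange inversion produces the coefficients, and Watson's lemma assembles \eqref{E:lag2}. The only difference is the order: the paper carries out the \emph{circular} parametrization $z=re^u$ first, obtaining
\[
    g_m=[t^m]\,e^{t}\Bigl(\tfrac{\frac12\sigma(r)^2 t^2}{\phi(re^t)-\phi(r)-r\phi'(r)t}\Bigr)^{\frac12(m+1)},
\]
and then reaches \eqref{E:lag-hm} by the substitution $v=e^t-1$, whereas you go straight to the vertical parametrization $t=R(1+v)$; the paper explicitly notes your route as the alternative (``can also be obtained directly by beginning with the coefficient integral with the change of variables $z=R(1+v)$'').

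One small caveat on your closing remarks: the Lagrangean circular coefficients are \emph{not} literally those of \eqref{E:cm-gen}---the extra Jacobian $e^u$ from $\dd z=re^u\dd u$ inserts the factor $e^t$ displayed above---so the coincidence $g_{2m}=h_{2m}$ here is a direct consequence of the substitution $v=e^t-1$ rather than of the $\varphi$-computation \eqref{E:zero} used for Stirling's case. This does not affect your proof of \eqref{E:lag2}, only the side comment about how the two parametrizations match.
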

The expression for the coefficients in the expansion \eqref{E:lag2} 
is much simpler than that given in \cite[Theorem 2]{Hwang2018}. 
\begin{proof}
We work out the asymptotic expansion in the circular case, the 
vertical line case then following from a change of variables. As an 
asymptotic expansion of the form \eqref{E:lag2} can either be 
justified by singularity analysis as in \cite{Hwang2018} or by 
standard saddle-point analysis as in \cite{Drmota1994}, we focus here 
on the (formal) calculation of the coefficients. By \eqref{E:lag-inv}
\begin{align*}
	n[z^n]f(z) &= \frac1{2\pi i}\oint_{|z|=r}
	z^{-n}G(z)^n\dd z\\
	&= \frac{r^{1-n}}{2\pi i}
	\int_{-\pi i}^{\pi i} e^u(e^{-u} G(re^u))^n\dd u\\
	&\thickapprox \frac{r^{1-n}G(r)^{n}}{2\pi\sigma(r)}
	\int_{-\ve i}^{\ve i} e^{\frac12nv^2}g(v) \dd v,
\end{align*}
where $\sigma(r)^2 := r\phi'(r)+r^2\phi''(r)$, $r\phi'(r)=1$, $g(v) 
= e^u \frac{\dd u}{\dd v} = \frac{\dd{}}{\dd v}e^u$, and 
\[
	\frac{\phi(re^u)-\phi(r)-r\phi'(r)u}
	{\sigma(r)^2} = \frac{v^2}2.
\]
We then deduce that 
\begin{align}\label{E:lag1}
	n[z^n]f(z)
	\sim \frac{r^{1-n}G(r)^n}
	{\sqrt{2\pi n}\,\sigma(r)}
	\sum_{m\ge0}g_{2m}\frac{(-1)^m(2m)!}
    {2^mm!}\, (\sigma(r)^2n)^{-m},
\end{align}
where
\begin{align}\label{E:lag-gm}
	g_m = [t^m]e^t\llpa{\frac{\frac12\sigma(r)^2t^2}
	{\phi(re^t)-\phi(r)-r\phi'(r)t}}^{\frac12(m+1)}.
\end{align}
By the change of variables $v=e^t-1$, we obtain the expression 
\eqref{E:lag-hm}, which can also be obtained directly by beginning 
with the coefficient integral with the change of variables $z=R(1+v)$.
\end{proof}

In particular, if $\phi(z)=z$ or $G(z)=e^z$, then 
\[
 	n[z^n]f(z) = \frac{n^{n-1}}{(n-1)!}
 	= [z^{n-1}]e^{nz},
\]
and we obtain the same expressions as derived above for Stirling's 
formula. 

\subsection{Catalan numbers}
For simplicity, we consider only Catalan numbers for which 
$G(z) = (1-z)^{-1}$ or $\phi(z) = -\log(1-z)$, so that
\[
	[z^n]f(z) = [z^n]\frac{1-\sqrt{1-4z}}2
	= \frac1n\binom{2n-2}{n-1}.
\]
Then the positive solution of the equation $r\phi'(r)=1$ is given by 
$r=\frac12$, and from either of the two equations \eqref{E:lag-hm} and
\eqref{E:lag-hm}, we have the asymptotic expansion ($\sigma(R)^2=2$)
\[
	\frac1n\binom{2n-2}{n-1}
	\sim \frac{4^{n-1}}{\sqrt{\pi}}\,
	\sum_{m\ge0}
	h_{2m}\frac{(-1)^m(2m)!}{4^mm!}\, n^{-m-\frac32},
\]
and the identity 
\[
	h_m := [y^m]\llpa{\frac{y^2}
	{-\log(1-y^2)}}^{\frac12(m+1)}
	= [v^m]e^v\llpa{\frac{v^2}
	{-\log(2-e^v)-v}}^{\frac12(m+1)},
\]
for $m\ge0$, which follows simply by the change of variables 
$y=e^v-1$. Note particularly that $h_{2l+1}=0$ for $l\ge0$.

On the other hand, by singularity analysis (see \cite{Flajolet2009})
\begin{align*}
	\frac1n\binom{2n-2}{n-1}
	&= [z^n]\frac{1-\sqrt{1-4z}}{2}
	= -\frac{4^n}{4\pi i}
	\int e^{nt}\sqrt{1-e^{-t}}\dd t\\
	&\sim -\frac{4^n}{4\pi i}\sum_{m\ge0}b_m
	\int_{\mathcal{H}} e^{nt}t^{m+\frac12}\dd t
	\sim -\frac{4^n}{2}\sum_{m\ge0}
	\frac{b_m}{\Gamma(-m-\frac12)}\,n^{-m-\frac32},
\end{align*}
where $b_m = [t^m]\lpa{(1-e^{-t})/t}^{\frac12}$. Now by the relation 
\begin{align*}
	-\frac1{\Gamma(-m-\frac12)}
	= \frac{(-1)^m(2m+2)!}
	{\sqrt{\pi}(m+1)!4^{m+1}},
\end{align*}
we then get
\begin{align}\label{E:cata-ae}
	\frac1n\binom{2n-2}{n-1}
	\sim \frac{4^n}{2\sqrt{\pi}}
	\sum_{m\ge0}\frac{b_m(-1)^m(2m+2)!}
	{(m+1)!4^{m+1}}\, n^{-m-\frac32}.
\end{align}
It follows that $h_{2m} = (2m+1)b_m$, which can also be proved 
directly by a change of variables. 

For large $m$, it is known (see \cite[p.~39]{Norlund1961}) that 
\[
	b_m \sim \frac{\sin(\frac12m\pi)}
	{\sqrt{\pi}}\, (2\pi)^{-n}m^{-\frac32},
\]
implying that the expansion \eqref{E:cata-ae} is divergent for
$n\ge1$. Since the right-hand side is zero when $m$ is even, we can
refine the approximation by the same singularity analysis and obtain
\[
	b_m \sim (-1)^{\lfloor{\frac12m}\rfloor}(2\pi)^{-m}
	\times \begin{cases}
		\frac{3\sqrt{\pi}}{4}\,m^{-\frac52}, 
		&\text{if $m$ is even};\\
		\frac{1}{\sqrt{\pi}}\, m^{-\frac32}, 
		&\text{if $m$ is odd}.
	\end{cases}
\]

On the other hand, we can improve the asymptotic expansion by noting 
that 
\[
	\llpa{\frac{1-e^{-t}}t}^{\frac12}
	= e^{-\frac14t}\Lpa{\frac 2t\sinh \frac t2}^{\frac12}
	= e^{-\frac14t}\sum_{m\ge0}b_{2m}' t^{2m};
\]
thus, by the same singularity analysis
\[
	\frac1n\binom{2n-2}{n-1}
	\sim\frac{4^n}{2\sqrt{\pi}}
	\sum_{m\ge0} \frac{b_{2m}'(4m+2)!}
	{(2m+1)!4^{2m+1}}
    \, \lpa{n-\tfrac14}^{-2m-\frac32},
\]
an expansion containing only even terms.

Yet another way to derive an asymptotic expansion for Catalan 
numbers is as follows. Let $G(z)= (1+z)^2$. Then 
\[
	\frac1{n+1}\binom{2n}{n}
	= \frac1n[t^{n-1}](1+z)^{2n},
\]
and we have
\[
	\frac1{n+1}\binom{2n}{n}
	\sim \frac{4^n}{\sqrt{\pi}}
	\sum_{m\ge0}h_{2m}\frac{(-1)^m(2m)!}{m!}\, n^{-m-\frac32},
\]
where
\[
	h_m := [v^m]\llpa{\frac{v^2}
	{4\log\frac{(1+\frac12v)^2}{1+v}}}^{\frac12{(m+1)}}.
\]
By a direct change of variables, we have
\[
	h_{2m}
	= 4^{-m}[y^m](2e^y-1)
	\sqrt{\frac{y}{1-e^{-y}}}.
\]

\bibliographystyle{abbrv}
\bibliography{stirling-variants} 
\end{document}